\documentclass[reqno]{amsart}
\usepackage{latexsym, amsmath, amscd, amssymb, amsthm, bm, mathrsfs}
\usepackage{amsrefs}
\usepackage[colorlinks=true,linkcolor=blue,urlcolor=blue,citecolor=blue]{hyperref}
  
 \newtheorem{lemma}{Lemma}[section]

\newtheorem{cor}[lemma]{Corollary}
\newtheorem{prop}[lemma]{Proposition}

\theoremstyle{definition}

\newtheorem{remark}[lemma]{Remark}
\newtheorem{example}[lemma]{Example}
\theoremstyle{remark}

\DeclareMathOperator{\Ker}{Ker}

\DeclareMathOperator{\Image}{Im}
\DeclareMathOperator{\rk}{rk}

\begin{document}

\title{Envelopes and evolutes}
\author[R.~Piene]{Ragni Piene}
\address{Ragni Piene\\Department of Mathematics\\
University of Oslo\\P.O.Box 1053 Blindern\\NO-0316 Oslo\\Norway}
\email{\href{mailto:ragnip@math.uio.no}{ragnip@math.uio.no}}
\urladdr{\href{http://www.mn.uio.no/math/english/people/aca/ragnip/index.html}
{www.mn.uio.no/math/english/people/aca/ragnip/index.html}}

\date{\today}

\keywords{Envelope, evolute, Thom--Boardman singularities, Thom polynomials}
\subjclass[2020]{Primary 14N15; Secondary 14C17, 58K20}

\begin{abstract}
The study of evolutes of plane curves goes back at least to Huygens, and was continued and extended to space curves by Monge, Darboux, and others. Salmon studied projective curves and surfaces and their evolutes and gave many enumerative formulas for their degrees and number of singularities.

We define envelopes of families of linear spaces in projective space. In order to define evolutes we impose a notion of perpendicularity, which allows us to consider the normal spaces to a projective variety. The evolute of a projective hypersurface is  the envelope of the family of normal lines. For a variety of dimension $r$ in $n$-space, the evolute is defined as the $(n-r)$th ``iterated'' cuspidal locus of the map from the total space of the normal spaces to projective space. Thus the envelope can be interpreted as a $(n-r)$th order Thom--Boardman singularity. Further higher order Thom--Boardman singularities correspond, for a curve in the plane or  in 3-space, to classical objects like the vertices of the curve; for a surface in 3-space, they give the cuspidal curve -- and its cusps -- on the evolute.
Using known formulas for Thom polynomials we are able to verify and generalize many of Salmon's formulas and find new ones.
\end{abstract}

\maketitle

\hfill{\emph{To the memory of L\^e D\~ung Tr\'ang}}

\section{Introduction}

The evolute of a curve in the real Euclidean plane is the locus of its centers of curvature (Huygens 1673, see \cite{Huy}), i.e., the centers of its osculating circles. Alternatively, it can be defined as the envelope of the family of normal lines to the curve. 
A hundred years later, Monge \cite{Monge} considered space curves (``courbes \`a double courbure''). He replaced the centers of the osculating  circles by the  polar lines (``axes polaires'') -- the lines through the center of an osculating circle perpendicular to the osculating plane. The union of these lines forms the ``surface des p\^oles'' (polar surface) of the curve (see \cite{Del}*{3.2, pp.~240--243}). Monge observed that this surface is developable and equal to the envelope of the normal planes to the curve. He showed how to construct infinitely many curves -- ``d\'evelopp\'ees'' (evolutes) -- on this surface, with the property that the tangents of these curves all intersect the given curve; indeed, the tangents are normals to the given curve. In particular, the ``surface des p\^oles''  is \emph{foliated} by the ``d\'evelopp\'ees''. 
These  ``d\'evelopp\'ees'' were also studied by Darboux  \cite{Darb}*{p.~18}.
Monge introduced the envelope of the polar lines, which he called the ``ar\^ete de rebroussement'' (cuspidal edge). Thus the tangents to the cuspidal edge of the polar surface are the polar lines of the given curve.
For more on the history of evolutes of space curves in the 18th and 19th century, see \cite{Del}.

The natural generalization of the evolute of a plane curve to a space curve, in Euclidean space, is to take for the evolute
 the locus of its centers of \emph{spherical} curvature, i.e., the centers of its osculating spheres \cite{P1}*{p.~101}. 
Alternatively, one can define the evolute of a space curves as the cuspidal edge of the envelope of the family of normal planes to the curve. This evolute was called the ``Evolute zweiter Art'' (evolute of the second kind) by Blaschke and Leichtweiss \cite{BL}*{p.~56}. They observed that the evolute is characterized by the fact that its osculating planes are the normal planes to the given curve. Hence the space evolute is a solution to the following interpolation problem: Given a space curve, find a space curve such that its osculating planes are the normal planes to the given curve.
 
Porteous \cite{P1} treated real curves in $\mathbb R^3$ from the point of view of  differential geometry. There are also more recent works such as \cites{Fuchs}, \cite{FT}, \cite{FIRST}.
The \emph{generalized evolute} of a curve in higher dimensional Euclidean space was defined by Romero-Fuster and Sanabria-Codesal \cite{Romero} and Uribe-Vargas \cite{UV} as the locus of the centers of its osculating hyperspheres. 
In  \cite{P1}, Porteous also considered the evolute (which he called the \emph{focal surface}) of a surface in $\mathbb R^3$. In particular he gave a detailed  local description of its singularities. 

Salmon treated the theory of polar surfaces (he called them polar developables) and evolutes of \emph{complex projective} space curves. He observed that the cuspidal edge of the polar developable is the locus of the centers of spherical curvature  \cite{Salmon}*{Art.~372, p.~339}. In particular, he gave formulas for their degrees in terms of the numerical characters of the given  curve. He also studied the evolute (which he called the ``surface of centres'') of a surface in projective 3-space; in particular, he found formulas for the degree and class of the evolute and for the number of umbilics of the surface.

In this paper we consider projective complex varieties of any dimension and define the analogue of generalized evolutes of real curves. The evolutes are defined as ``iterated cuspidal loci'' of the envelope of the family of normal spaces to the given variety. These loci are Thom--Boardman singularities of the map from the total space of the family of normal spaces to the projective space, and hence their classes can be computed from known Thom polynomials.
In Section \ref{envelopes} we define envelopes of families of linear spaces and determine the cycle class of their singular (cuspidal) loci. In Section \ref{evolutes} we introduce a notion of perpendicularity which allows us to define normal spaces to a projective variety. We can therefore consider the family of normal spaces to the given variety and define evolutes. In Section \ref{polarcurves}, we prove numerical formulas for the envelope of the normal spaces to curves in arbitrary dimensional projective space. We show that our formulas agree with those of Salmon in the case of curves in 3-space. Section \ref{polarsurfaces} computes various cycle classes associated to the evolute of a surface, and, in the case of a surface in 3-space, the degree of its evolute, the degree of the cuspidal curve of the evolute, and the number of cusps of the cuspidal curve. Section \ref{osccurves} treats osculating developables of curves in the context of envelopes. 
\medskip

{\bf Acknowledgments.}
Many thanks to Boris Shapiro for fruitful discussions that started at the meeting ``Mørketidens Mattemøte'' at the University of Troms{\o} in January 2023, where our participation was supported by the project Pure Mathematics in Norway, funded by Trond Mohn Foundation and Troms{\o} Research Foundation.

The author would also like to thank the Isaac Newton Institute for Mathematical Sciences, Cambridge, for support and hospitality during the programme ``New equivariant methods in algebraic and differential geometry'' in March 2024, where a part of the work on this paper was undertaken. This work was supported by EPSRC grant EP/R014604/1. Particular thanks go to L\'aszl\'o Feh\'er and Andrzej Weber for helpful discussions. Finally, the author thanks the referee for very useful comments.

\section{Envelopes of families of linear spaces}\label{envelopes}
We fix a $\mathbb C$-vector space $V$ of dimension $n+1$ and consider the corresponding projective $n$-dimensional space $\mathbb P(V)$. (We use the Grothendieck convention, so that points in $\mathbb P(V)$ are surjections $V\to \mathbb C$.)
Let $X$ be a projective nonsingular variety of dimension $r$, and $f\colon X\to \mathbb P(V)$ a morphism. Assume $\mathcal F$ is a rank $n-r+1$ locally free sheaf on $X$ and $V_X\to \mathcal F$ a surjective map. 
Then $\mathcal F$ gives rise to a family of linear $(n-r)$-spaces 
\[\psi\colon \mathbb P(\mathcal F) \to \mathbb P(V),\]
 where $\psi$ is the composition of the inclusion $\mathbb P(\mathcal F)\subset X\times \mathbb P(V)$ with the projection on the second factor $X\times \mathbb P(V) \to \mathbb P(V)$. Note that $\dim \mathbb P(\mathcal F)=\dim \mathbb P(V)$.
 
The \emph{envelope} $E_\mathcal F$ of the family of linear spaces is the branch locus of the morphism $\psi$.
From a singularities-of-mappings point of view, this means that the envelope is the image by $\psi$ of the Thom--Boardman singularity
 \[\Sigma^1:=\Sigma^1(\psi)=\{y\in \mathbb P(\mathcal F) \,|\, \rk d\psi_y \le n-1\}.\]
Here we shall also consider the higher order Thom--Boardman singularities $\Sigma^{1,1}=\Sigma^1(\psi|_{\Sigma^1})$ and  $\Sigma^{1,1,1}=\Sigma^1(\psi|_{\Sigma^{1,1}})$.

The classes of these loci are given by Thom polynomials in the Chern classes $\overline c_i:=c_i(\psi^*T_{\mathbb P(V)}-T_{\mathbb P(\mathcal F)})$, provided the map $\psi$ is suitably generic, in particular provided the loci $\Sigma^1$, $\Sigma^{1,1}$, and $\Sigma^{1,1,1}$ have the expected codimension. 
In what follows, we shall always assume this holds.

Recall that the Thom polynomial for $\Sigma^1$ is $\overline c_1$, that for $\Sigma^{1,1}$ is $\overline c_1^2+\overline c_2$ \cite{Ronga}, and that for 
$\Sigma^{1,1,1}$ is $\overline c_1^3+3\overline c_1\overline c_2+2\overline c_3$ \cite{Rim}. 
(The Thom polynomial for $\Sigma^{1,1,1,1}$ was
computed by Gaffney, Porteous, and Ronga \cite{Gaf}*{Thm.~2.2, p.~407}:
\[\overline c_1^4 + +6\overline c_1^2\overline c_2+ 9\overline c_1\overline c_3 +2\overline c_2^2 + 6\overline c_4,\]
and those for $\Sigma^{1,1,\dots,1}$ up to codimension 8 by Rimanyi \cite{Rim}*{Thm.~5.1, p.~508}.)
\medskip

Set $c_i':=c_i(T_{\mathbb P(V)})$ and $c_i:=c_i(T_{\mathbb P(\mathcal F)})$. 

\begin{lemma}\label{chern}
We have
\begin{eqnarray*}
\overline c_1   &=&  \psi^*c_1'-c_1\\
\overline c_2 &=&  \psi^*c_2'-c_2-\psi^*c_1'c_1+c_1^2\\
\overline c_3  &=&  \psi^*c_3'-c_3-\psi^*c_2'c_1-\psi^*c_1'c_2+\psi^*c_1'c_1^2+2c_1c_2-c_1^3.
\end{eqnarray*}
\end{lemma}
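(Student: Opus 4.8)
The plan is to reduce the statement to the multiplicativity of the total Chern class. By definition the $\overline c_i$ are the Chern classes of the virtual bundle $\psi^*T_{\mathbb P(V)}-T_{\mathbb P(\mathcal F)}$, so the Whitney product formula (applied in $K$-theory, where $c$ sends sums to products) gives the total Chern class as a ratio
\[
\overline c \;=\; \frac{c\bigl(\psi^*T_{\mathbb P(V)}\bigr)}{c\bigl(T_{\mathbb P(\mathcal F)}\bigr)}.
\]
First I would invoke the naturality of Chern classes under pullback, $c_i\bigl(\psi^*T_{\mathbb P(V)}\bigr)=\psi^*c_i\bigl(T_{\mathbb P(V)}\bigr)=\psi^*c_i'$, so that the numerator is $1+\psi^*c_1'+\psi^*c_2'+\psi^*c_3'+\cdots$ and the denominator is $1+c_1+c_2+c_3+\cdots$.

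The key computational step is to invert the denominator as a formal power series graded by codimension. Writing $c:=c_1+c_2+c_3+\cdots$ for its positive-degree part and using $(1+c)^{-1}=1-c+c^2-c^3+\cdots$, I would collect homogeneous pieces to obtain
\[
\bigl(1+c_1+c_2+c_3+\cdots\bigr)^{-1}
=1-c_1+\bigl(c_1^2-c_2\bigr)+\bigl(-c_1^3+2c_1c_2-c_3\bigr)+\cdots .
\]
Multiplying this inverse by the numerator and extracting the components of codimension $1$, $2$, and $3$ then produces exactly the three displayed formulas: the degree-one term gives $\psi^*c_1'-c_1$, the degree-two term gives $\psi^*c_2'-\psi^*c_1'c_1+c_1^2-c_2$, and the degree-three term gives $\psi^*c_3'-\psi^*c_2'c_1-\psi^*c_1'c_2+\psi^*c_1'c_1^2-c_3+2c_1c_2-c_1^3$.

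This argument requires nothing beyond the Whitney product formula and its formal inverse, so there is no genuine conceptual obstacle. The only point demanding care is the bookkeeping in the degree-three term, where the mixed contributions $\psi^*c_1'c_2$ and $\psi^*c_2'c_1$ coming from the cross products, together with the purely-$c$ contribution $2c_1c_2-c_1^3$ from the cubic part of the expansion, must all be tracked with the correct signs and multiplicities. I would simply verify these coefficients by matching degrees, which confirms the stated identities.
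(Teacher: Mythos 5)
Your proposal is correct and follows exactly the paper's own argument: the paper's proof consists of the single observation that $\overline c_i$ is the degree-$i$ term of $(\sum \psi^*c_i')(\sum c_i)^{-1}$, which is precisely the formal inversion and degree-by-degree extraction you carry out. The coefficients in your degree-two and degree-three terms all check out.
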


\begin{proof}
This is straightforward, from the fact that $\overline c_i$ is the $i$th degree term of $(\sum \psi^*c_i')(\sum c_i)^{-1}$.
\end{proof}

Let $\pi\colon \mathbb P(\mathcal F) \to X$ denote the structure map.

\begin{lemma}\label{cF}
We have
\[ c(\Omega_{\mathbb P(\mathcal F)}^1)=\pi^*c(\Omega^1_X)c(\pi^*\mathcal F \otimes \mathcal O_{\mathbb P(\mathcal F)}(-1)).
\]
\end{lemma}

\begin{proof}
The exact sequence
\[ 0\to \pi^* \Omega_X^1 \to \Omega_{\mathbb P(\mathcal F)}^1 \to \Omega^1_{\mathbb P(\mathcal F)/X} \to 0
\]
gives $c(\Omega_{\mathbb P(\mathcal F)}^1)=\pi^*c(\Omega^1_X)c(\Omega_{\mathbb P(\mathcal F)/X}^1)$,
and the sequence
\[ 0 \to \Omega_{\mathbb P(\mathcal F)/X}^1 \otimes \mathcal O_{\mathbb P(\mathcal F)}(1) \to \mathcal P^1_{\mathbb P(\mathcal F)/X}(1)=\pi^*\mathcal F \to  \mathcal O_{\mathbb P(\mathcal F)}(1) \to 0
\]
gives $c(\Omega_{\mathbb P(\mathcal F)/X}^1)=c(\pi^*\mathcal F \otimes \mathcal O_{\mathbb P(\mathcal F)}(-1))$.
\end{proof}

\begin{lemma}\label{chern2}
We have
\begin{eqnarray*}
\psi^*c_i'&=&\binom{n+1}ic_1(\mathcal O_{\mathbb P(\mathcal F)}(1))^i\\
c_1&=&-\pi^*c_1(\Omega^1_X)-\pi^*c_1(\mathcal F)+(n-r+1)c_1(\mathcal O_{\mathbb P(\mathcal F)}(1))\\
c_2&=& \pi^*(c_2(\Omega^1_X)+c_2(\mathcal F)+c_1(\Omega^1_X)c_1(\mathcal F))
-(n-r+1)\pi^*c_1(\Omega^1_X)c_1(\mathcal O_{\mathbb P(\mathcal F)}(1))\\
&&-(n-r)\pi^*c_1(\mathcal F)c_1(\mathcal O_{\mathbb P(\mathcal F)}(1))
+\binom{n-r+1}2c_1(\mathcal O_{\mathbb P(\mathcal F)}(1))^2\\
c_3&=&-\sum_{i=0}^3(-1)^{3-i}\binom{n-r+1-i}{3-i}\pi^*c_i(\mathcal F)c_1(\mathcal O_{\mathbb P(\mathcal F)}(1))^{3-i} \\
&&-\pi^*c_1(\Omega_X^1)\sum_{i=0}^2(-1)^{2-i}\binom{n-r+1-i}{2-i}\pi^*c_i(\mathcal F)c_1(\mathcal O_{\mathbb P(\mathcal F)}(1))^{2-i} \\
&&-\pi^*c_2(\Omega_X^1)\sum_{i=0}^1(-1)^{1-i}\binom{n-r+1-i}{1-i}\pi^*c_i(\mathcal F)c_1(\mathcal O_{\mathbb P(\mathcal F)}(1))^{1-i} \\
&& -\pi^*c_3(\Omega_X^1).
\end{eqnarray*}
\end{lemma}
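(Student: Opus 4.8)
The plan is to obtain all three groups of formulas from the two standard Euler sequences, the tangent sequence of $\pi$, and the Whitney product formula, combined with the Chern-class formula for tensoring by a line bundle. Throughout I write $\xi := c_1(\mathcal{O}_{\mathbb{P}(\mathcal{F})}(1))$ and $e := n-r+1 = \rk\mathcal{F}$, and I use that the surjection $V_X\to\mathcal{F}$ induces $\psi$ in Grothendieck's convention, so that $\psi^*\mathcal{O}_{\mathbb{P}(V)}(1) = \mathcal{O}_{\mathbb{P}(\mathcal{F})}(1)$ and hence $\psi^*H = \xi$ for $H := c_1(\mathcal{O}_{\mathbb{P}(V)}(1))$. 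The Euler sequence $0\to\mathcal{O}_{\mathbb{P}(V)}\to V^\vee\otimes\mathcal{O}_{\mathbb{P}(V)}(1)\to T_{\mathbb{P}(V)}\to 0$ gives $c(T_{\mathbb{P}(V)}) = (1+H)^{n+1}$, so $c_i' = \binom{n+1}{i}H^i$ and the first assertion follows by pulling back: $\psi^*c_i' = \binom{n+1}{i}\xi^i$.

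For the classes $c_i = c_i(T_{\mathbb{P}(\mathcal{F})})$ I would split the tangent bundle by
\[0 \to T_{\mathbb{P}(\mathcal{F})/X} \to T_{\mathbb{P}(\mathcal{F})} \to \pi^*T_X \to 0,\]
so that $c(T_{\mathbb{P}(\mathcal{F})}) = c(T_{\mathbb{P}(\mathcal{F})/X})\cdot\pi^*c(T_X)$. The relative Euler sequence $0\to\mathcal{O}\to\pi^*\mathcal{F}^\vee(1)\to T_{\mathbb{P}(\mathcal{F})/X}\to 0$ gives $c(T_{\mathbb{P}(\mathcal{F})/X}) = c(\pi^*\mathcal{F}^\vee\otimes\mathcal{O}_{\mathbb{P}(\mathcal{F})}(1))$, and the tensor-twist formula $c_k(\mathcal{E}\otimes L) = \sum_i\binom{e-i}{k-i}c_i(\mathcal{E})c_1(L)^{k-i}$ together with $c_i(\pi^*\mathcal{F}^\vee) = (-1)^i\pi^*c_i(\mathcal{F})$ yields
\[c_k(T_{\mathbb{P}(\mathcal{F})/X}) = \sum_{i=0}^k(-1)^i\binom{e-i}{k-i}\pi^*c_i(\mathcal{F})\,\xi^{k-i},\]
which is already the first line of the stated expression for $c_3$.

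It then remains to multiply by $\pi^*c(T_X) = \sum_j(-1)^j\pi^*c_j(\Omega^1_X)$, where I rewrite $c_j(T_X) = (-1)^jc_j(\Omega^1_X)$, and to extract the homogeneous parts of degree $1$, $2$, and $3$. For $c_1$ only the degree-$1$ piece survives; for $c_2$ one collects $c_2(T_{\mathbb{P}(\mathcal{F})/X})$, the cross term with $\pi^*c_1(\Omega^1_X)$, and $\pi^*c_2(\Omega^1_X)$; and for $c_3$ the three further lines appear precisely as the products of $\pi^*c_j(\Omega^1_X)$, for $j=1,2,3$, with the relative Chern class $c_{3-j}(T_{\mathbb{P}(\mathcal{F})/X})$ computed above.

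The computation is mechanical once these ingredients are in place, and I expect the only genuine difficulty to be the bookkeeping of signs and binomial coefficients. The one point requiring care is the choice of convention for $\mathbb{P}(\mathcal{F})$: it must be fixed so that $c_1(T_{\mathbb{P}(\mathcal{F})/X}) = e\xi - \pi^*c_1(\mathcal{F})$ rather than $e\xi + \pi^*c_1(\mathcal{F})$, which is what forces the dual $\mathcal{F}^\vee$ into the relative Euler sequence; thereafter one must correctly interleave the signs coming from $c_j(T_X) = (-1)^jc_j(\Omega^1_X)$ with those already present in $c_k(T_{\mathbb{P}(\mathcal{F})/X})$, which is where an error is most likely to enter, especially in the three-line formula for $c_3$.
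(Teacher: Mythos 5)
Your proposal is correct and follows essentially the same route as the paper: the Euler sequence on $\mathbb P(V)$ for the first formula, and then the relative (co)tangent sequence of $\pi$ together with the relative Euler sequence and the tensor-twist formula for $c_k(\mathcal E\otimes L)$ for the rest. The only cosmetic difference is that you compute $c(T_{\mathbb P(\mathcal F)})$ directly from $\pi^*\mathcal F^\vee\otimes\mathcal O_{\mathbb P(\mathcal F)}(1)$, whereas the paper computes $c(\Omega^1_{\mathbb P(\mathcal F)})$ from $\pi^*\mathcal F\otimes\mathcal O_{\mathbb P(\mathcal F)}(-1)$ and dualizes at the end via $c_i=(-1)^ic_i(\Omega^1_{\mathbb P(\mathcal F)})$; your sign bookkeeping, including the convention point that forces $\mathcal F^\vee$ into the relative Euler sequence, is handled correctly.
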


\begin{proof}
The exact sequence 
\[ 0\to \Omega_{\mathbb P(V)}^1(1) \to V_{\mathbb P(V)} \to \mathcal O_{\mathbb P(V)}^1(1) \to 0
\]
gives 
\[c(T_{\mathbb P(V)})=c(V^\vee\otimes \mathcal O_{\mathbb P(V)}(1)) =\bigl(1+c_1(\mathcal O_{\mathbb P(V)}(1))\bigr)^{n+1}.\]
Since $\psi^*\mathcal O_{\mathbb P(V)}(1)=\mathcal O_{\mathbb P(\mathcal F)}(1)$,
 the first equality follows. 

For the other three, we use Lemma \ref{cF} and the fact that
\[c_k(\pi^*\mathcal F\otimes \mathcal O_{\mathbb P(\mathcal F)}(-1))=\sum_{i=0}^k (-1)^{k-i} \binom{n-r+1-i}{k-i}c_i(\pi^*\mathcal F)c_1(\mathcal O_{\mathbb P(\mathcal F)}(1))^{k-i}.\] 
It follows that $c_i=c_i(T_{\mathbb P(\mathcal F)})=(-1)^ic_i(\Omega_{\mathbb P(\mathcal F)}^1)$ is as stated  in the lemma.
\end{proof}

Let  $s_{i}(\mathcal F)=\pi_*c_1(\mathcal O_{\mathbb P(\mathcal F)}(1))^{n-r+i}$ denote the $i$th Segre class of $\mathcal F$.

\begin{prop}\label{env}
The class of the envelope $E_\mathcal F$  is 
\[[E_\mathcal F]=\psi_*\bigl(\pi^* c_1(\Omega^1_X)+\pi^*c_1(\mathcal F)+rc_1(\mathcal O_{\mathbb P(\mathcal F)}(1))\bigr)\cap [\mathbb P(V)],\]
and its degree is
\[\deg E_\mathcal F=\bigl(c_1(\Omega_X^1)s_{r-1}(\mathcal F)+c_1(\mathcal F)s_{r-1}(\mathcal F)+rs_r(\mathcal F)\bigr)\cap [X].
\]
\end{prop}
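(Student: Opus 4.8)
The plan is to identify the class of the envelope as the $\psi$-pushforward of the fundamental class of $\Sigma^1$ and then evaluate everything through the Thom polynomial and the Chern-class formulas already at hand. Since $E_\mathcal{F}=\psi(\Sigma^1)$ and the Thom polynomial for $\Sigma^1$ is $\overline c_1$, one has $[\Sigma^1]=\overline c_1\cap[\mathbb P(\mathcal F)]$, so that $[E_\mathcal{F}]=\psi_*\bigl(\overline c_1\cap[\mathbb P(\mathcal F)]\bigr)$, provided $\psi$ restricts to a birational map of $\Sigma^1$ onto its image. The task then reduces to writing $\overline c_1$ in terms of $\pi^*c_1(\Omega^1_X)$, $\pi^*c_1(\mathcal F)$, and $\xi:=c_1(\mathcal O_{\mathbb P(\mathcal F)}(1))$.

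First I would combine Lemma \ref{chern}, which gives $\overline c_1=\psi^*c_1'-c_1$, with Lemma \ref{chern2}, which gives $\psi^*c_1'=(n+1)\xi$ and $c_1=-\pi^*c_1(\Omega^1_X)-\pi^*c_1(\mathcal F)+(n-r+1)\xi$. Subtracting, the two $\xi$-coefficients collapse via $(n+1)-(n-r+1)=r$, so
\[
\overline c_1=\pi^*c_1(\Omega^1_X)+\pi^*c_1(\mathcal F)+r\,\xi,
\]
and pushing this forward by $\psi$ yields the first displayed formula for $[E_\mathcal{F}]$.

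For the degree I would intersect $[E_\mathcal{F}]$ with $c_1(\mathcal O_{\mathbb P(V)}(1))^{n-1}$ and take degrees. Using the projection formula together with $\psi^*\mathcal O_{\mathbb P(V)}(1)=\mathcal O_{\mathbb P(\mathcal F)}(1)$ transfers the computation to $\mathbb P(\mathcal F)$, giving
\[
\deg E_\mathcal{F}=\int_{\mathbb P(\mathcal F)}\bigl(\pi^*c_1(\Omega^1_X)+\pi^*c_1(\mathcal F)+r\,\xi\bigr)\,\xi^{n-1}.
\]
Pushing forward along $\pi$ and invoking the definition $s_i(\mathcal F)=\pi_*\xi^{\,n-r+i}$, so that $\pi_*\xi^{\,n-1}=s_{r-1}(\mathcal F)$ and $\pi_*\xi^{\,n}=s_r(\mathcal F)$, the three terms produce exactly $c_1(\Omega^1_X)s_{r-1}(\mathcal F)+c_1(\mathcal F)s_{r-1}(\mathcal F)+r\,s_r(\mathcal F)$ capped with $[X]$. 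The Chern-class manipulation and the Segre-class pushforward are routine; the one point that genuinely needs justification is the opening identity $[E_\mathcal{F}]=\psi_*[\Sigma^1]$, i.e.\ that $\psi$ carries $\Sigma^1$ birationally onto the envelope so that no multiplicity is introduced. I expect this to follow from the genericity of the family $\mathcal F$, or else to be folded into the definition of the class $[E_\mathcal{F}]$.
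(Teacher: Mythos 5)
Your proposal is correct and follows essentially the same route as the paper: express the Thom polynomial $\overline c_1=\psi^*c_1'-c_1$ via Lemmas \ref{chern} and \ref{chern2} so that the $\xi$-coefficients collapse to $r$, then cap with $c_1(\mathcal O_{\mathbb P(V)}(1))^{n-1}$ and push forward along $\pi$ using the Segre-class identity $s_i(\mathcal F)=\pi_*c_1(\mathcal O_{\mathbb P(\mathcal F)}(1))^{n-r+i}$. Your closing remark about $\psi_*[\Sigma^1]$ versus multiplicities is a fair point that the paper implicitly absorbs into its use of the Thom polynomial as the definition of the class of the branch locus.
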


\begin{proof}
The class of $E_\mathcal F$ is given by the Thom polynomial $\overline c_1=\psi^*c_1'-c_1$, hence
\[ [E_\mathcal F]=\psi_*\bigl((n+1-n+r-1)\psi^*c_1(\mathcal O_{\mathbb P(V)}(1)+\pi^*c_1(\Omega^1_X)+\pi^*c_1(\mathcal F)\bigr)\cap [\mathbb P(V)],\]
which shows the first equality, since $\psi^*\mathcal O_{\mathbb P(V)}(1)=\mathcal O_{\mathbb P(\mathcal F)}(1)$.

The degree of $E_\mathcal F$ is
\[\deg E_\mathcal F=c_1(\mathcal O_{\mathbb P(V)}(1))^{n-1} \cap [E_\mathcal F],\]
which is equal to
\[\psi_*\bigl(\pi^*(c_1(\Omega^1_X)+c_1(\mathcal F))c_1(\mathcal O_{\mathbb P(\mathcal F)}(1))^{n-1}+rc_1(\mathcal O_{\mathbb P(\mathcal F)}(1))^n\bigr)\cap [\mathbb P(V)].\]
We can replace $\psi_*$ by $\pi_*$. Then, using the projection formula,
 the second equality follows.
\end{proof}

\begin{cor}\label{envcurve}
Assume $X$ is a curve, so that $r=1$. Then
\[\deg E_\mathcal F =\bigl(c_1(\Omega_X^1)+2c_1(\mathcal F)\bigr)\cap [X].\]
\end{cor}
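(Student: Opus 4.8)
The plan is to obtain this as a direct specialization of the degree formula in Proposition \ref{env}, reducing everything to the evaluation of the two Segre classes that survive when $r=1$. Setting $r=1$ in
\[\deg E_{\mathcal F}=\bigl(c_1(\Omega_X^1)s_{r-1}(\mathcal F)+c_1(\mathcal F)s_{r-1}(\mathcal F)+rs_r(\mathcal F)\bigr)\cap[X]\]
turns the three terms into $c_1(\Omega_X^1)s_0(\mathcal F)$, $c_1(\mathcal F)s_0(\mathcal F)$, and $s_1(\mathcal F)$. So the task is to identify $s_0(\mathcal F)$ and $s_1(\mathcal F)$ for the rank-$n$ bundle $\mathcal F$ on the curve $X$.

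First I would record $s_0(\mathcal F)=1$: with $r=1$ the bundle $\mathcal F$ has rank $n$, so $\pi\colon\mathbb P(\mathcal F)\to X$ has fibres $\mathbb P^{n-1}$, and by the definition $s_0(\mathcal F)=\pi_*c_1(\mathcal O_{\mathbb P(\mathcal F)}(1))^{n-1}$ this is the pushforward of the top fibrewise power of the relative hyperplane class $\xi:=c_1(\mathcal O_{\mathbb P(\mathcal F)}(1))$, namely $[X]$. Next I would compute $s_1(\mathcal F)=c_1(\mathcal F)$. Here the only point requiring care is the sign convention: in the Grothendieck convention used throughout (where $\mathbb P(\mathcal F)$ parametrizes rank-one quotients and $\mathcal O_{\mathbb P(\mathcal F)}(1)$ is the universal quotient line bundle, compatibly with $\psi^*\mathcal O_{\mathbb P(V)}(1)=\mathcal O_{\mathbb P(\mathcal F)}(1)$), the class $\xi$ satisfies the defining relation $\sum_{i=0}^{n}(-1)^i\pi^*c_i(\mathcal F)\xi^{n-i}=0$, i.e. $\xi^n=\pi^*c_1(\mathcal F)\xi^{n-1}-\pi^*c_2(\mathcal F)\xi^{n-2}+\cdots$. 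Applying $\pi_*$ and using $\pi_*\xi^{n-1}=1$ together with $\pi_*\xi^{n-2}=0$ on the curve $X$, only the leading term survives, giving $s_1(\mathcal F)=\pi_*\xi^n=c_1(\mathcal F)$. I would flag explicitly that this is $+c_1(\mathcal F)$, not the $-c_1$ of the ``subspace'' (Fulton) Segre convention.

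Substituting these two values into the specialized formula yields
\[\deg E_{\mathcal F}=\bigl(c_1(\Omega_X^1)+c_1(\mathcal F)+c_1(\mathcal F)\bigr)\cap[X]=\bigl(c_1(\Omega_X^1)+2c_1(\mathcal F)\bigr)\cap[X],\]
as claimed. I do not expect a genuine obstacle: apart from the Segre/Chern sign bookkeeping noted above, the argument is pure substitution into Proposition \ref{env}, so the corollary is essentially immediate.
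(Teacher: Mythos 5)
Your proposal is correct and follows exactly the paper's route: the paper's proof is the one-line observation that for $r=1$ one has $s_{r-1}(\mathcal F)=s_0(\mathcal F)$ and $s_r(\mathcal F)=s_1(\mathcal F)=c_1(\mathcal F)$, substituted into Proposition \ref{env}. Your additional verification of $s_0(\mathcal F)=1$ and $s_1(\mathcal F)=c_1(\mathcal F)$ via the defining relation for $c_1(\mathcal O_{\mathbb P(\mathcal F)}(1))$ in the quotient convention is accurate and merely makes explicit what the paper leaves implicit.
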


\begin{proof}
In this case, $s_{r-1}(\mathcal F)=s_0(\mathcal F)$ and $s_r(\mathcal F)=s_1(\mathcal F)=c_1(\mathcal F)$.
\end{proof}

\begin{cor}\label{envsurf}
Assume $X$ is a surface, so that $r=2$. Then
\[\deg E_\mathcal F =\bigl(c_1(\Omega_X^1)c_1(\mathcal F)+3c_1(\mathcal F)^2-2c_2(\mathcal F)\bigr)\cap [X] .\]
\end{cor}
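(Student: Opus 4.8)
The plan is to specialize the general degree formula of Proposition \ref{env} to the case $r=2$ and then to express the two Segre classes appearing there in terms of the Chern classes $c_1(\mathcal F)$ and $c_2(\mathcal F)$. Setting $r=2$ in Proposition \ref{env} gives immediately
\[\deg E_\mathcal F=\bigl(c_1(\Omega_X^1)s_1(\mathcal F)+c_1(\mathcal F)s_1(\mathcal F)+2s_2(\mathcal F)\bigr)\cap[X],\]
so the entire content of the corollary reduces to identifying $s_1(\mathcal F)$ and $s_2(\mathcal F)$.

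First I would record the two Segre classes. With $\xi:=c_1(\mathcal O_{\mathbb P(\mathcal F)}(1))$ and $e:=\rk\mathcal F=n-r+1$, the quotient presentation $\pi^*\mathcal F\to\mathcal O_{\mathbb P(\mathcal F)}(1)$ used in Lemma \ref{chern2} yields the Grothendieck relation $\sum_{i=0}^e(-1)^i\pi^*c_i(\mathcal F)\xi^{e-i}=0$. Pushing this forward against the defining identities $\pi_*\xi^{e-1}=s_0(\mathcal F)=1$ and $\pi_*\xi^{e-1+i}=s_i(\mathcal F)$, and iterating once to reduce $\xi^{e+1}$ to a combination of $\xi^{e-1}$ and lower powers, gives $s_1(\mathcal F)=c_1(\mathcal F)$ and $s_2(\mathcal F)=c_1(\mathcal F)^2-c_2(\mathcal F)$. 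The first of these is already the relation used in the proof of Corollary \ref{envcurve}, so it need only be carried one step further.

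Substituting $s_1(\mathcal F)=c_1(\mathcal F)$ and $s_2(\mathcal F)=c_1(\mathcal F)^2-c_2(\mathcal F)$ into the specialized formula and collecting the $c_1(\mathcal F)^2$ contributions --- namely $c_1(\mathcal F)^2$ from the middle summand together with $2c_1(\mathcal F)^2$ from $2s_2(\mathcal F)$ --- produces $c_1(\Omega_X^1)c_1(\mathcal F)+3c_1(\mathcal F)^2-2c_2(\mathcal F)$, which is the asserted expression. This last step is a one-line substitution.

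The only place requiring care --- and the step I would double-check --- is the sign convention governing the Segre classes. Because $\mathcal O_{\mathbb P(\mathcal F)}(1)$ is here a \emph{quotient} of $\pi^*\mathcal F$ rather than the dual of a tautological subbundle, the relevant total identity is $s(\mathcal F)=c(\mathcal F^\vee)^{-1}$ rather than $c(\mathcal F)^{-1}$; this is exactly what makes $s_1(\mathcal F)=+c_1(\mathcal F)$, consistent with Corollary \ref{envcurve}, and $s_2(\mathcal F)=c_1(\mathcal F)^2-c_2(\mathcal F)$. Fixing this sign correctly is the whole of the argument, the remainder being routine bookkeeping.
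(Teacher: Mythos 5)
Your proposal is correct and follows exactly the paper's own route: specialize Proposition \ref{env} to $r=2$ and substitute $s_1(\mathcal F)=c_1(\mathcal F)$, $s_2(\mathcal F)=c_1(\mathcal F)^2-c_2(\mathcal F)$, which is all the paper's proof consists of. Your extra care about the quotient convention for $\mathcal O_{\mathbb P(\mathcal F)}(1)$ and the resulting signs (so that $s(\mathcal F)=c(\mathcal F^\vee)^{-1}$) is accurate and consistent with the paper's definition $s_i(\mathcal F)=\pi_*c_1(\mathcal O_{\mathbb P(\mathcal F)}(1))^{n-r+i}$.
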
 

\begin{proof}
We have $s_1(\mathcal F)=c_1(\mathcal F)$ and $s_2(\mathcal F)=c_1(\mathcal F)^2-c_2(\mathcal F)$.
\end{proof}
\medskip

The ramification locus of the restriction of the map of sheaves to $\Sigma^1$ is the second order Thom--Boardman singularity locus $\Sigma^{1,1}$. Denote by $C_\mathcal F$ the image by $\psi$ of this ramification locus, i.e., $C_\mathcal F:=\psi_*(\Sigma^{1,1})$ is the branch locus of $\psi|_{E_{\mathcal F}}$. In case that $X$ is a curve in 3-space, $C_\mathcal F$ is the \emph{cuspidal edge} of the developable  surface $E_{\mathcal F}$.

\begin{prop}\label{cusp}
The ``cuspidal edge'' $C_\mathcal F\subset \mathbb P(V)$ of the envelope $E_\mathcal F$ has class
\begin{multline*} [C_\mathcal F]=\psi_*
\bigl(\pi^*(2c_1(\Omega^1_X)^2+3c_1(\Omega^1_X)c_1(\mathcal F)+2c_1(\mathcal F)^2
-c_2(\Omega^1_X)-c_2(\mathcal F))\\
+(3r-1)\pi^*c_1(\mathcal F)c_1(\mathcal O_{\mathbb P(\mathcal F)}(1))
+3r\pi^*c_1(\Omega^1_X)c_1(\mathcal O_{\mathbb P(\mathcal F)}(1))\\+\frac{3r^2-r}2 c_1(\mathcal O_{\mathbb P(\mathcal F)}(1))^2\bigr)\cap [\mathbb P(V)],
\end{multline*}
and its degree is
\begin{multline*}
\deg C_\mathcal F=\bigl((2c_1(\Omega^1_X)^2+3c_1(\Omega^1_X)c_1(\mathcal F)+2c_1(\mathcal F)^2
-c_2(\Omega^1_X)-c_2(\mathcal F))s_{r-2}(\mathcal F)\\
+((3r-1)c_1(\mathcal F)
+3rc_1(\Omega^1_X))s_{r-1}(\mathcal F)+\frac{3r^2-r}2 s_r(\mathcal F)\bigr)\cap [X].
\end{multline*}

\end{prop}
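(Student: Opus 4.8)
The plan is to follow the template of the proof of Proposition~\ref{env}, replacing the Thom polynomial $\overline c_1$ for $\Sigma^1$ by the Thom polynomial $\overline c_1^2+\overline c_2$ for $\Sigma^{1,1}$. Thus $[\Sigma^{1,1}]=(\overline c_1^2+\overline c_2)\cap[\mathbb P(\mathcal F)]$ and $[C_\mathcal F]=\psi_*[\Sigma^{1,1}]$, and the two displayed formulas will come from first computing this class in $\mathbb P(\mathcal F)$ and then intersecting its pushforward with a power of the hyperplane class on $\mathbb P(V)$.

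First I would substitute the expressions of Lemma~\ref{chern}. Squaring $\overline c_1=\psi^*c_1'-c_1$ and adding $\overline c_2$ gives
\[\overline c_1^2+\overline c_2=(\psi^*c_1')^2+\psi^*c_2'-3\psi^*c_1'c_1+2c_1^2-c_2,\]
an identity purely in the classes $c_i'$ and $c_i$. Next I would feed in Lemma~\ref{chern2}. Writing $\xi:=c_1(\mathcal O_{\mathbb P(\mathcal F)}(1))$ and $m:=n-r+1=\rk\mathcal F$, each of $\psi^*c_1'=(n+1)\xi$, $\psi^*c_2'=\binom{n+1}{2}\xi^2$, $c_1$, and $c_2$ becomes a polynomial in $\xi$ whose coefficients are pullbacks $\pi^*$ of Chern classes of $\Omega_X^1$ and $\mathcal F$. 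I would then group the result by the power of $\xi$ multiplying a pulled-back class: the $\xi^0$ part should yield the first bracketed line (the terms $2c_1(\Omega_X^1)^2+3c_1(\Omega_X^1)c_1(\mathcal F)+2c_1(\mathcal F)^2-c_2(\Omega_X^1)-c_2(\mathcal F)$); the $\xi^1$ part should yield the coefficient $3r$ of $\pi^*c_1(\Omega_X^1)\xi$ and $3r-1$ of $\pi^*c_1(\mathcal F)\xi$; and the $\xi^2$ part should yield $\tfrac{3r^2-r}{2}$. Applying $\psi_*$ and using $\psi^*\mathcal O_{\mathbb P(V)}(1)=\mathcal O_{\mathbb P(\mathcal F)}(1)$ then gives the stated class $[C_\mathcal F]$.

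For the degree, since $C_\mathcal F$ has codimension $2$ in $\mathbb P(V)$, I would compute $\deg C_\mathcal F=c_1(\mathcal O_{\mathbb P(V)}(1))^{n-2}\cap[C_\mathcal F]$. By the projection formula this equals $\pi_*(\xi^{n-2}\cap[\Sigma^{1,1}])$, replacing $\psi_*$ by $\pi_*$ exactly as in Proposition~\ref{env}. Distributing $\xi^{n-2}$ across the three pieces of $[\Sigma^{1,1}]$ produces $\xi^{n-2},\xi^{n-1},\xi^n$ paired with the $\xi^0,\xi^1,\xi^2$ coefficients, and the Segre class definition $\pi_*\xi^{n-r+i}=s_i(\mathcal F)$ converts these to $s_{r-2}(\mathcal F),s_{r-1}(\mathcal F),s_r(\mathcal F)$ respectively, giving the second formula.

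The routine but error-prone part is the bookkeeping in collecting the $\xi$-coefficients, and the one place worth checking carefully is the $\xi^2$ term: it is the sum $(m+r)^2+\binom{m+r}{2}-3m(m+r)+2m^2-\binom{m}{2}$, a combination in which every power of $m$ must cancel so that only $\tfrac{3r^2-r}{2}$ survives. That the dependence on the rank $m$ drops out, leaving an answer in $r=\dim X$ alone, is exactly what the geometry demands and serves as the main consistency check on the whole computation.
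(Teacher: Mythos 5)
Your proposal is correct and follows essentially the same route as the paper: expand $\overline c_1^2+\overline c_2$ via Lemma~\ref{chern} to get $\psi^*{c_1'}^2+\psi^*c_2'-3\psi^*c_1'c_1+2c_1^2-c_2$, substitute Lemma~\ref{chern2}, collect by powers of $c_1(\mathcal O_{\mathbb P(\mathcal F)}(1))$, and convert to Segre classes via $\pi_*$ for the degree. Your explicit verification that the rank $m=n-r+1$ cancels in the $\xi^2$ coefficient, leaving $\tfrac{3r^2-r}{2}$, is a detail the paper leaves implicit and checks out.
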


\begin{proof}
The Thom polynomial for $\Sigma^{1,1}$ is $\overline c_1^2+\overline c_2$. From Lemma \ref{chern} we get
\[\overline c_1^2+\overline c_2=\psi^*{c_1'}^2 + 2c_1^2-3\psi^*c_1'c_1+\psi^*c_2'-c_2.\]
Using Lemma \ref{chern2}, this gives the expressions for $[C_\mathcal F]$ and $\deg C_\mathcal F$.
\end{proof}

\begin{prop}\label{cuspscusp}
Set $h:=c_1(\mathcal O_{\mathbb P(\mathcal F)}(1))=\psi^*c_1(\mathcal O_{\mathbb P(V)}(1)$, $w_i:=\pi^*c_i(\Omega^1_X)$, and $f_i:=\pi^*c_i(\mathcal F)$. 
The class of the cuspidal locus $\kappa_\mathcal F:=\psi_*(\Sigma^{1,1,1})$ of the cuspidal edge $C_\mathcal F$ is
\begin{multline*}\textstyle
[\kappa_\mathcal F]=\psi_*\bigl(
(17\binom{r}3 +12\binom{r}2 +r)h^3
+\bigl((17\binom{r}2 + 6r)w_1
 \textstyle +  (17\binom{r}2 +r +2)f_1\bigr)h^2\\
 +\bigl(11rw_1^2+(17r-5)w_1f_1+(11r-7)f_1^2-5rw_2-(5r-4)f_2\bigr)h\\
 +6w_1^3+11w_1^2f_1+11w_1f_1^2+6f_1^3-7w_1w_2
 -7w_2f_1-5w_1f_2-7f_1f_2+2w_3+2f_3\bigr)\cap [\mathbb P(V)].
\end{multline*}
\end{prop}

\begin{proof} The class of $\kappa_\mathcal F$ is given by the Thom polynomial of $\Sigma^{1,1,1}$, which is equal to $\overline c_1^3+3\overline c_1\overline c_2+2\overline c_3$.  Lemma \ref{chern} and Lemma \ref{chern2} give 
\begin{multline*}
\textstyle \overline c_1=rc_1(\mathcal O_{\mathbb P(\mathcal F)}(1))+\pi^*(c_1(\Omega^1_X)+c_1(\mathcal F) )\\
\textstyle \overline c_2=\binom{r}2 c_1(\mathcal O_{\mathbb P(\mathcal F)}(1))^2+\pi^*(rc_1(\Omega^1_X)+(r-1)c_1(\mathcal F))c_1(\mathcal O_{\mathbb P(\mathcal F)}(1))\\
+\pi^*(c_1(\Omega^1_X)^2+c_1(\Omega^1_X)c_1(\mathcal F)+c_1(\mathcal F)^2
-c_2(\Omega^1_X)-c_2(\mathcal F))\\
\textstyle \overline c_3=\binom{r}3c_1(\mathcal O_{\mathbb P(\mathcal F)}(1))^3 +\pi^*(\binom{r}2 c_1(\Omega_X^1)+\binom{r-1}2 c_1(\mathcal F))c_1(\mathcal O_{\mathbb P(\mathcal F)}(1))^2 \\+\pi^*\bigl(rc_1(\Omega_X^1)^2+(r-1)c_1(\Omega_X^1)c_1(\mathcal F)
+(r-2)c_1(\mathcal F)^2-rc_2(\Omega_X^1)\\
-(r-2)c_2(\mathcal F)\bigr)c_1(\mathcal O_{\mathbb P(\mathcal F)}(1))
+\pi^*\bigl(c_3(\mathcal F)+c_3(\Omega_X^1)+c_1(\Omega_X^1)c_2(\mathcal F)\\
-2c_1(\Omega_X^1)c_2(\Omega_X^1)-2c_1(\Omega_X^1)c_2(\mathcal F)\-2c_2(\Omega_X^1)c_1(\mathcal F)
-2c_1(\mathcal F)c_2(\mathcal F)\\
+c_1(\Omega_X^1)^3+c_1(\Omega_X^1)^2c_1(\mathcal F)+c_1(\Omega_X^1)c_1(\mathcal F)^2+c_1(\mathcal F)^3\bigr),
\end{multline*}
from which the proposition follows. Note that the expression is polynomial in $r$ and independent of $n$.
\end{proof}

\begin{cor}\label{degcuspcurve}
Assume $X$ is a curve, so that $r=1$. Then
\[\deg C_\mathcal F=
3\bigl(c_1(\Omega^1_X)+c_1(\mathcal F)\bigr)\cap [X].
\]
If $n\ge 3$, then
\[\deg \kappa_\mathcal F=2\bigl(3c_1(\Omega^1_X)+2c_1(\mathcal F)\bigr)\cap [X] .\]
\end{cor}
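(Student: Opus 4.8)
The two formulas come from specializing Proposition \ref{cusp} and Proposition \ref{cuspscusp} to $r=1$, and the only genuine computation lies in the second. For $\deg C_\mathcal F$ I would simply substitute $r=1$ into the degree formula already furnished by Proposition \ref{cusp}. On the curve $X$ the Segre classes of $\mathcal F$ satisfy $s_{r-2}(\mathcal F)=s_{-1}(\mathcal F)=0$, $s_{r-1}(\mathcal F)=s_0(\mathcal F)=1$, and $s_r(\mathcal F)=s_1(\mathcal F)=c_1(\mathcal F)$, so the entire $s_{r-2}(\mathcal F)$ term drops out. With $3r-1=2$, $3r=3$, and $(3r^2-r)/2=1$, the surviving terms collapse to $(2c_1(\mathcal F)+3c_1(\Omega^1_X))+c_1(\mathcal F)=3c_1(\Omega^1_X)+3c_1(\mathcal F)$, which is the claimed formula.

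For $\deg\kappa_\mathcal F$ the situation differs, because Proposition \ref{cuspscusp} supplies only the cycle class $[\kappa_\mathcal F]$ and not its degree; the degree must be extracted by one further intersection. The key simplification is that $X$ is a curve, so $A^i(X)=0$ for $i\ge 2$: every monomial in $[\kappa_\mathcal F]$ whose coefficient is a class pulled back from $X$ of codimension $\ge 2$ vanishes. Setting $r=1$, so that $\binom{1}{2}=\binom{1}{3}=0$, and discarding those terms, the only survivors are the three carrying at most a simple base class, namely $c_1(\mathcal O_{\mathbb P(\mathcal F)}(1))^3$ with coefficient $1$, together with $c_1(\Omega^1_X)c_1(\mathcal O_{\mathbb P(\mathcal F)}(1))^2$ with coefficient $6$ and $c_1(\mathcal F)c_1(\mathcal O_{\mathbb P(\mathcal F)}(1))^2$ with coefficient $3$.

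It then remains to compute $\deg\kappa_\mathcal F=c_1(\mathcal O_{\mathbb P(V)}(1))^{n-3}\cap[\kappa_\mathcal F]$, which is where the hypothesis $n\ge 3$ enters: for $n<3$ the expected codimension-$3$ locus $\kappa_\mathcal F$ is empty and the power $c_1(\mathcal O_{\mathbb P(V)}(1))^{n-3}$ is meaningless. Proceeding exactly as in the proof of Proposition \ref{env}, I would replace $\psi_*$ by $\pi_*$, apply the projection formula, and use $\psi^*\mathcal O_{\mathbb P(V)}(1)=\mathcal O_{\mathbb P(\mathcal F)}(1)$ to reduce everything to pushforwards of powers of $c_1(\mathcal O_{\mathbb P(\mathcal F)}(1))$ along $\pi$. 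With $\pi_*c_1(\mathcal O_{\mathbb P(\mathcal F)}(1))^{n-1}=s_0(\mathcal F)=1$ and $\pi_*c_1(\mathcal O_{\mathbb P(\mathcal F)}(1))^{n}=s_1(\mathcal F)=c_1(\mathcal F)$, the three surviving terms contribute $c_1(\mathcal F)$, $6c_1(\Omega^1_X)$, and $3c_1(\mathcal F)$ respectively, summing to $6c_1(\Omega^1_X)+4c_1(\mathcal F)=2(3c_1(\Omega^1_X)+2c_1(\mathcal F))$, as claimed.

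There is no conceptual obstacle here once the vanishing $A^{\ge 2}(X)=0$ is invoked; the only thing to watch is the bookkeeping of which monomials survive and the correct matching of Segre indices. The mildly subtle point worth flagging explicitly is the asymmetry between the two formulas: the degree of $C_\mathcal F$ is read off directly from Proposition \ref{cusp}, whereas the degree of $\kappa_\mathcal F$ requires the extra capping-and-pushforward step applied to the class of Proposition \ref{cuspscusp}.
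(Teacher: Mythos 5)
Your proposal is correct and follows essentially the same route as the paper: substitute $r=1$ into Propositions \ref{cusp} and \ref{cuspscusp}, discard the terms carrying base classes of codimension $\ge 2$ (which vanish on a curve), and extract the degrees via the Segre classes $s_0(\mathcal F)=1$ and $s_1(\mathcal F)=c_1(\mathcal F)$. Your explicit capping with $c_1(\mathcal O_{\mathbb P(V)}(1))^{n-3}$ and the surviving-monomial bookkeeping reproduce exactly the intermediate expression for $[\kappa_\mathcal F]$ displayed in the paper's proof.
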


\begin{proof}
From Proposition \ref{cusp}, with $r=1$, we get the formula for $\deg C_\mathcal E$, and
\[[\kappa_\mathcal F]= \psi_*\bigl(c_1(\mathcal O_{\mathbb P(\mathcal F)}(1))^3 +6\pi^*c_1(\Omega^1_X)c_1(\mathcal O_{\mathbb P(\mathcal F)}(1))^2
+3\pi^*c_1(\mathcal F) c_1(\mathcal O_{\mathbb P(\mathcal F)}(1))^2\bigr)\cap [\mathbb P(V)]
\]
from which the formula for $\deg \kappa_\mathcal E$ follows by observing that  $s_1(\mathcal F)=c_1(\mathcal F)$ and $s_2(\mathcal F)=c_1(\mathcal F)^2-c_2(\mathcal F)$.
\end{proof}

\begin{cor}\label{degcuspcurvesurface}
Assume $X$ is a surface, so that $r=2$. Then
\[
\deg C_\mathcal F= \bigl(2c_1(\Omega^1_X)^2 -c_2(\Omega^1_X)+ 9 c_1(\Omega^1_X)c_1(\mathcal F) +12c_1(\mathcal F)^2
-6c_2(\mathcal F)\bigr)\cap [X]
\]
and
\[
\deg \kappa_\mathcal F=2\bigl(11 c_1(\Omega^1_X)^2 -5  c_2(\Omega^1_X)+29 c_1(\Omega^1_X)c_1(\mathcal F) +25c_1(\mathcal F)^2-10c_2(\mathcal F)\bigr)\cap [X].
\]
\end{cor}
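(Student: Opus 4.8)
The plan is to obtain both formulas by specializing the general results of Proposition~\ref{cusp} and Proposition~\ref{cuspscusp} to $r=2$, combined with the surface Segre identities $s_0(\mathcal F)=1$, $s_1(\mathcal F)=c_1(\mathcal F)$, and $s_2(\mathcal F)=c_1(\mathcal F)^2-c_2(\mathcal F)$ used already in Corollary~\ref{envsurf}. The formula for $\deg C_\mathcal F$ is then immediate: I would substitute $r=2$ into the degree formula of Proposition~\ref{cusp}, so that the numerical coefficients become $3r-1=5$, $3r=6$, and $\tfrac{3r^2-r}2=5$, while $s_{r-2}(\mathcal F)=1$, $s_{r-1}(\mathcal F)=c_1(\mathcal F)$, and $s_r(\mathcal F)=c_1(\mathcal F)^2-c_2(\mathcal F)$. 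Expanding the three contributions and collecting monomials yields the stated expression; the only accumulations worth checking are that the coefficient of $c_1(\mathcal F)^2$ becomes $2+5+5=12$ and that of $c_2(\mathcal F)$ becomes $-1-5=-6$.

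For $\deg\kappa_\mathcal F$ I would start from the \emph{class} given in Proposition~\ref{cuspscusp}, set $r=2$ (so that $\binom{r}3=0$ and $\binom{r}2=1$), and organize the resulting codimension-$3$ class $\Phi$ on $\mathbb P(\mathcal F)$ by powers of $h:=c_1(\mathcal O_{\mathbb P(\mathcal F)}(1))$. Since $\kappa_\mathcal F$ has codimension $3$ in $\mathbb P(V)$, its degree is $c_1(\mathcal O_{\mathbb P(V)}(1))^{n-3}\cap[\kappa_\mathcal F]$; using $\psi^*\mathcal O_{\mathbb P(V)}(1)=\mathcal O_{\mathbb P(\mathcal F)}(1)$, the projection formula, and the replacement of $\psi_*$ by $\pi_*$ on a zero-cycle (as in the proof of Proposition~\ref{env}), this rewrites as $\pi_*\bigl(h^{n-3}\cdot\Phi\bigr)\cap[X]$. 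The key simplification is that every term of $\Phi$ carrying no factor of $h$ is the pullback of a codimension-$3$ class from the surface $X$; because $\pi_* h^{n-3}=s_{-1}(\mathcal F)=0$ (the fiber dimension is $n-2>n-3$), all such terms vanish, and in particular $c_3(\Omega^1_X)$ and $c_3(\mathcal F)$ never enter. The surviving terms are pushed forward via $\pi_* h^{n}=s_2(\mathcal F)$, $\pi_* h^{n-1}=s_1(\mathcal F)$, and $\pi_* h^{n-2}=s_0(\mathcal F)=1$, after which $s_1,s_2$ are written out in Chern classes of $\mathcal F$.

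The main obstacle is purely clerical rather than conceptual: Proposition~\ref{cuspscusp} contains close to twenty terms, and one must assign each the correct Segre weight according to its power of $h$ and then sum the contributions to each monomial in the $c_i(\Omega^1_X)$ and $c_j(\mathcal F)$. Carrying this out, the $h^n$ term contributes $14\,s_2(\mathcal F)$, the two $h^{n-1}$ terms contribute $29\,c_1(\Omega^1_X)s_1(\mathcal F)+21\,c_1(\mathcal F)s_1(\mathcal F)$, and the $h^{n-2}$ block contributes its coefficients verbatim. Collecting gives $22c_1(\Omega^1_X)^2+58c_1(\Omega^1_X)c_1(\mathcal F)+50c_1(\mathcal F)^2-10c_2(\Omega^1_X)-20c_2(\mathcal F)$, and factoring out $2$ produces exactly the stated formula. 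Once the vanishing of the $h$-free part of $\Phi$ is observed, no further ideas are required beyond careful bookkeeping.
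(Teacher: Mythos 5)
Your proposal is correct and takes essentially the same route as the paper: both specialize Propositions \ref{cusp} and \ref{cuspscusp} to $r=2$, organize the class of $\kappa_\mathcal F$ by powers of $c_1(\mathcal O_{\mathbb P(\mathcal F)}(1))$, and push forward via $s_0(\mathcal F)=1$, $s_1(\mathcal F)=c_1(\mathcal F)$, $s_2(\mathcal F)=c_1(\mathcal F)^2-c_2(\mathcal F)$, and your intermediate coefficients ($14$, $29$, $21$, $22$, $58$, $50$, $-10$, $-20$) agree exactly with the paper's displayed intermediate formula. The only cosmetic difference is that the paper silently omits the terms carrying no factor of $c_1(\mathcal O_{\mathbb P(\mathcal F)}(1))$ (they are pullbacks of codimension-$3$ classes from a surface, hence already zero), while you justify their disappearance via $s_{-1}(\mathcal F)=0$ --- both reasons are valid.
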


\begin{proof}
The first formula follows from Proposition \ref{cusp} with $r=2$. For the second, Proposition \ref{cuspscusp} gives
\begin{multline*}
[\kappa_\mathcal F]=\psi_*\bigl(
14c_1(\mathcal O_{\mathbb P(\mathcal F)}(1))^3 +\pi^*(29 c_1(\Omega^1_X)
+ 21 c_1(\mathcal F))
c_1(\mathcal O_{\mathbb P(\mathcal F)}(1))^2+ \\\pi^*( 22  c_1(\Omega^1_X)^2 
+29 c_1(\Omega^1_X)c_1(\mathcal F)
+15 c_1(\mathcal F)^2
-10 c_2(\Omega^1_X) - 6c_2(\mathcal F))c_1(\mathcal O_{\mathbb P(\mathcal F)}(1))\bigr)\cap [\mathbb P(V)].
\end{multline*}
To get the formula for the degree, use again $s_1(\mathcal F)=c_1(\mathcal F)$ and $s_2(\mathcal F)=c_1(\mathcal F)^2-c_2(\mathcal F)$.
\end{proof}

\section{Evolutes}\label{evolutes}

In order to define the normal spaces to a projective variety, we need to equip $\mathbb P(V)$  with a ``Euclidean structure'', by which we mean a notion of perpendicularity between linear spaces.  Let 
$V'$ be a vector space of dimension $n$ and $V\to V'$ a surjection, so that $\mathbb P(V')\subset \mathbb P(V)$ is a hyperplane. Set $H_\infty:=\mathbb P(V')$, the hyperplane at infinity. 
Let $Q_\infty\subset H_\infty$ be a non-singular quadric hypersurface. If $L\subset H_\infty$ is an $i$-dimensional linear space, we let $L^\perp \subset H_\infty$ denote the $(n-2-i)$-dimensional linear space which is \emph{polar} to $L$ with respect to $Q_\infty$ \cite{reci}*{Sec.~4, p.~244}.
If $L,L'\subset \mathbb P(V)$ are linear spaces, we let $\langle L,L'\rangle$ denote their linear span.

Assume that $X\subset \mathbb P(V)$ is a nonsingular variety of dimension $r$ and that $X$ is in general position with respect to $H_\infty$ and $Q_\infty$, i.e., $H_\infty$ and $X$,  and $Q_\infty$  and $X\cap H_\infty$, intersect transversally.  Let $\mathcal P^1_X(1)$ denote the sheaf of principal parts of $\mathcal O_X(1)$ of order 1, which is a vector bundle of rank $r+1$. The first jet map $V_X\to \mathcal P^1_X(1)$ is surjective, and the fibers  $T_P:=\mathbb P(\mathcal P^1_X(1))_P\subset  \mathbb P(V)$ are the projective tangent spaces to $X$.
Set $\mathcal K^1:=\Ker (V_X\to \mathcal P^1_X(1))$, so that $\mathcal K^1$ is the conormal bundle of $X$ in $\mathbb P(V)$ twisted by $\mathcal O_X(1)$. Then $\mathcal E:=(\mathcal K^1)^\vee \oplus \mathcal O_X(1)$ is the Euclidean normal bundle to $X$, and there is a natural surjective map $V_X \to \mathcal E$ (using the isomorphism $(V')^\vee \cong V'$ induced by the quadric $Q_\infty$)
\cite{reci}*{Prop.~4.1, p.~245}.  

The fibers $N_P:= \mathbb P(\mathcal E)_P\subset \mathbb P(V)$ are the normal spaces to $X$, i.e., the
$(n-r)$-spaces perpendicular to the tangent spaces $T_P$. Indeed, $N_P=\langle P, (T_P\cap H_\infty)^\perp \rangle$ (see \cite{reci}*{Sec.~4}).

Let $E_\mathcal E$ denote the envelope of the family $ \mathbb P(\mathcal E) \subset X\times \mathbb P(V) \to \mathbb P(V)$ of  normal $(n-r)$-spaces to $X$, i.e., $E_\mathcal E$ is the branch locus of the composed map $\psi\colon \mathbb P(\mathcal E) \to \mathbb P(V)$. We define the \emph{evolute} of $X$ to be the image by $\psi$ of the $(n-r)$th order singularity locus $\Sigma^{1,\dots,1}$, where the number of 1's is $n-r$. In particular, if $X$ is a hypersurface, then its evolute is equal to the envelope of its normal lines.

If $X\subset \mathbb P(V)$ is a nonsingular hypersurface of degree $d$, in general position with respect to $H_\infty$ and $Q_\infty$, we recover Trifogli's formula (see \cite{Trifogli}*{Thm.~2}, \cite{Polar_rev}*{Ex.~10, p.~149}):
\[\textstyle \deg E_\mathcal E=d(d-1)\bigl((n-1)(d-1)^{n-2}+2\sum_{i=0}^{n-2}(d-1)^i\bigr).\]
This follows from Proposition \ref{env}, with $\mathcal F=\mathcal E=(\mathcal K^1)^\vee \oplus \mathcal O_X(1)$. Since $X$ is a hypersurface of degree $d$, we have $c_1(\Omega_X^1)=c_1(\mathcal O_X(d-n-1))$ and $\mathcal K^1=\mathcal O_X(-d)\otimes \mathcal O_X(1)=\mathcal O_X(-d+1)$. 

Note that, for $d=2$, we get $\deg E_\mathcal E=6(n-1)$,
which gives the very classical formulas for the degree of the evolute of a plane conic ($n=2$) and of a quadric surface ($n=3$).

\begin{remark}\label{Nash}
If $f:X\to \mathbb P(V)$ is a morphism where $X$ is not necessarily nonsingular and $f$ is not necessarily an embedding, we may replace $X$ by its Nash transform $\nu:\widetilde X\to X$ and $V_X\to \mathcal P^1_X(1)$ by its Nash quotient $V_{\widetilde X}\to \mathcal P^1$ \cite{reci}*{Sec.~3}.
In this case, the genericity assumption is that $H_\infty$ intersects each Whitney stratum of $f(X)$ transversally, and that $Q_\infty$ intersects each Whitney stratum of $f(X)\cap H_\infty$ transversally \cite{reci}*{Sec.~4, p.~245}.
\end{remark}

\section{Evolutes of curves}\label{polarcurves}
Let $f: X\to \mathbb P(V)$ be a morphism from a nonsingular curve $X$, and assume $f$ is birational onto its image. Assume $f(X)$ is in general position with respect to $H_\infty$ and $Q_\infty$ (see Remark \ref{Nash}).
Let $d=f^*c_1(\mathcal O_{\mathbb P(V)}(1))\cap [X]$ denote the degree of $X$, and $g$ its genus.

 Let $k_0$ denote the degree of the ramification locus of $f$, i.e., $k_0$ is the ``weighted number of cusps'' of $f(X)$. Since $X$ is a nonsingular curve, the image  $\mathcal P^1$ of the first jet map $V_X\to \mathcal P^1_X(1)$, is a vector bundle, of rank 2. The cokernel of the jet map is equal to $\Omega^1_{X/\mathbb P(V)}(1)$. Hence we have an exact sequence 
 \[0\to \mathcal P^1\to \mathcal P^1_X(1)\to \Omega^1_{X/\mathbb P(V)}(1) \to 0.\]
 The ramification locus is defined by the 0th Fitting ideal of $\Omega^1_{X/\mathbb P(V)}$, equivalently of 
 $\Omega^1_{X/\mathbb P(V)}(1)$ \cite{numchar}*{p.~481}. Hence we get  
 \[c_1(\mathcal P^1)\cap [X] =c_1(\mathcal P^1_X(1))\cap [X]-k_0=2d+2g-2-k_0,\]
 where the last equality follows from the fact that $c_1(\mathcal P^1_X(1))=
 c_1(\Omega^1_X)+2c_1(\mathcal O_X(1))$.
 Set $\mathcal K^1:=\Ker(V_X \to \mathcal P^1)$. Then $\mathcal E=\mathcal (\mathcal K^1)^\vee \oplus \mathcal O_X(1)$  is the Euclidean normal bundle (see Remark \ref{Nash}).

\begin{prop}\label{poldev}
The class of the envelope $E_\mathcal E$ is 
\[[E_\mathcal E]=\psi_*\bigl(\pi^*(( c_1(\mathcal P^1)+c_1(\Omega_X^1) +c_1(\mathcal O_X(1))) + c_1(\mathcal O_{\mathbb P(\mathcal E)}(1))\bigr )\cap [\mathbb P(V)],\]
and its degree is
\[\deg E_\mathcal E=6(d+g-1)-2k_0.
\]
\end{prop}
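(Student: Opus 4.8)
The plan is to specialize the general envelope computation of Section~\ref{envelopes} to the Euclidean normal bundle $\mathcal E$ of a curve. Since $r=1$, Proposition~\ref{env} already gives
\[[E_\mathcal E]=\psi_*\bigl(\pi^*c_1(\Omega_X^1)+\pi^*c_1(\mathcal E)+c_1(\mathcal O_{\mathbb P(\mathcal E)}(1))\bigr)\cap[\mathbb P(V)],\]
and Corollary~\ref{envcurve} gives $\deg E_\mathcal E=\bigl(c_1(\Omega_X^1)+2c_1(\mathcal E)\bigr)\cap[X]$. Thus everything reduces to expressing $c_1(\mathcal E)$ in terms of the first-order invariants $\mathcal P^1$, $\Omega_X^1$, and $\mathcal O_X(1)$, and then evaluating the relevant degrees against $[X]$.

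First I would compute $c_1(\mathcal E)$. By definition $\mathcal E=(\mathcal K^1)^\vee\oplus\mathcal O_X(1)$, so $c_1(\mathcal E)=-c_1(\mathcal K^1)+c_1(\mathcal O_X(1))$. The defining sequence $0\to\mathcal K^1\to V_X\to\mathcal P^1\to 0$ together with the triviality of $V_X$ yields $c_1(\mathcal K^1)=-c_1(\mathcal P^1)$, hence $c_1(\mathcal E)=c_1(\mathcal P^1)+c_1(\mathcal O_X(1))$. Substituting $\pi^*c_1(\mathcal E)=\pi^*\bigl(c_1(\mathcal P^1)+c_1(\mathcal O_X(1))\bigr)$ into the formula from Proposition~\ref{env} gives the stated class $[E_\mathcal E]$ at once.

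For the degree, I would insert $c_1(\mathcal E)=c_1(\mathcal P^1)+c_1(\mathcal O_X(1))$ into Corollary~\ref{envcurve} to obtain
\[\deg E_\mathcal E=\bigl(c_1(\Omega_X^1)+2c_1(\mathcal P^1)+2c_1(\mathcal O_X(1))\bigr)\cap[X].\]
Using $c_1(\Omega_X^1)\cap[X]=2g-2$ and $c_1(\mathcal O_X(1))\cap[X]=d$, it remains to evaluate $c_1(\mathcal P^1)\cap[X]$. Here I would use the principal parts sequence $0\to\Omega_X^1(1)\to\mathcal P^1_X(1)\to\mathcal O_X(1)\to 0$, which gives $c_1(\mathcal P^1_X(1))\cap[X]=(2g-2+d)+d=2d+2g-2$. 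The sheaf $\mathcal P^1$ is the image of the jet map $V_X\to\mathcal P^1_X(1)$, a locally free rank-$2$ subsheaf of $\mathcal P^1_X(1)$; since the evaluation composite $V_X\to\mathcal P^1_X(1)\to\mathcal O_X(1)$ is already surjective, the torsion quotient $\mathcal P^1_X(1)/\mathcal P^1$ is supported on the branch (cusp) locus of $f$ with length $k_0$. Therefore $c_1(\mathcal P^1)\cap[X]=2d+2g-2-k_0$, and substituting gives $\deg E_\mathcal E=(2g-2)+2(2d+2g-2-k_0)+2d=6(d+g-1)-2k_0$.

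The main obstacle is this last identification: that the colength of $\mathcal P^1$ inside $\mathcal P^1_X(1)$ equals the branch-locus degree $k_0$. This is where the precise meaning of ``cusp'' enters, since one must check that the jet map drops rank from $2$ to $1$ exactly along the ramification of $f$, with the correct multiplicities. Equivalently, restricting to $\Ker(V_X\to\mathcal O_X(1))$ produces a map into the line bundle $\Omega_X^1(1)$ whose degeneracy scheme is the cusp scheme; identifying its length with $k_0$ is the one step that uses more than formal Chern-class bookkeeping, everything else being a direct substitution into Proposition~\ref{env} and Corollary~\ref{envcurve}.
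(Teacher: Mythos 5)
Your proposal is correct and follows essentially the same route as the paper: specialize Proposition~\ref{env} and Corollary~\ref{envcurve} to $\mathcal F=\mathcal E$, use $c_1(\mathcal E)=-c_1(\mathcal K^1)+c_1(\mathcal O_X(1))=c_1(\mathcal P^1)+c_1(\mathcal O_X(1))$, and evaluate with $c_1(\mathcal P^1)\cap[X]=2d+2g-2-k_0$. The only difference is that you sketch a justification of this last identity via the principal parts sequence and the colength of $\mathcal P^1$ in $\mathcal P^1_X(1)$, whereas the paper simply imports it from the reference \cite{numchar}.
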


\begin{proof}
The proposition follows from Corollary \ref{envcurve} with $\mathcal F=\mathcal E$, using $c_1(\mathcal E)=-c_1(\mathcal K^1)+c_1(\mathcal O_X(1))=c_1(\mathcal P^1)+c_1(\mathcal O_X(1))$ and $c_1(\mathcal P^1)\cap [X]=2d+2g-2-k_0$.

\end{proof}

\begin{remark} When $n=2$, so that $f(X)$ is a plane curve, the envelope is the \emph{evolute} of the curve. See \cite{ev} for a thorough treatment of this case, both in the real and complex situation.
\end{remark}

\begin{prop}\label{spaceev}
The class of the ``cuspidal edge''  $C_\mathcal E$ of the envelope is
\[
\psi_*\bigl (2\pi^*c_1(\mathcal P^1)c_1(\mathcal O_{\mathbb P(\mathcal E)}(1))\\
+3\pi^*c_1(\Omega^1_X)c_1(\mathcal O_{\mathbb P(\mathcal E)}(1))+3c_1(\mathcal O_{\mathbb P(\mathcal E)}(1))^2\bigr)\cap [\mathbb P(V)],
\]
and its degree is
\[\deg C_\mathcal E=3(3d+4g-4-k_0).\]
If $n\ge 3$, the class of the cuspidal locus $\kappa_\mathcal E$ of the cuspidal edge is
\[
\psi_*\bigl (3\pi^*c_1(\mathcal P^1)c_1(\mathcal O_{\mathbb P(\mathcal E)}(1))^2\\
+6\pi^*c_1(\Omega^1_X)c_1(\mathcal O_{\mathbb P(\mathcal E)}(1))^2+c_1(\mathcal O_{\mathbb P(\mathcal E)}(1))^3\bigr)\cap [\mathbb P(V)],
\]
and its degree is 
\[\deg \kappa_\mathcal E=4\bigl(3d+5g-5-k_0\bigr).
\]
\end{prop}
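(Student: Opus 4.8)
The plan is to read Proposition \ref{spaceev} as the $r=1$ specialization of the envelope machinery of Section \ref{envelopes}, applied to the Euclidean normal bundle $\mathcal E=(\mathcal K^1)^\vee\oplus\mathcal O_X(1)$, and then to translate the resulting Chern-class expressions into the numerical curve invariants $d$, $g$, $k_0$ by means of the identities already used in the proof of Proposition \ref{poldev}. Concretely, the classes of $C_\mathcal E$ and $\kappa_\mathcal E$ come from Proposition \ref{cusp} and Proposition \ref{cuspscusp} with $\mathcal F=\mathcal E$ and $r=1$, while the two degrees come most quickly from Corollary \ref{degcuspcurve}.

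For the degrees I would start from Corollary \ref{degcuspcurve}, which gives $\deg C_\mathcal F=3\bigl(c_1(\Omega^1_X)+c_1(\mathcal F)\bigr)\cap[X]$ and, for $n\ge 3$, $\deg\kappa_\mathcal F=2\bigl(3c_1(\Omega^1_X)+2c_1(\mathcal F)\bigr)\cap[X]$. Setting $\mathcal F=\mathcal E$ and inserting $c_1(\mathcal E)=c_1(\mathcal P^1)+c_1(\mathcal O_X(1))$ together with $c_1(\Omega^1_X)\cap[X]=2g-2$, $c_1(\mathcal O_X(1))\cap[X]=d$, and $c_1(\mathcal P^1)\cap[X]=2d+2g-2-k_0$, one collects $\deg C_\mathcal E=3(3d+4g-4-k_0)$ and $\deg\kappa_\mathcal E=4(3d+5g-5-k_0)$. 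This step is routine arithmetic once the four numerical identities are in hand.

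For the cycle classes I would return to the full expressions in Proposition \ref{cusp} and Proposition \ref{cuspscusp} and specialize $r=1$ before passing to numbers. Since $\dim X=1$, every summand that is a pullback $\pi^*(\,\cdot\,)$ of a class of codimension $\ge 2$ on $X$ vanishes; writing $\xi=c_1(\mathcal O_{\mathbb P(\mathcal E)}(1))$, the bracket in Proposition \ref{cusp} collapses to $2\pi^*c_1(\mathcal E)\,\xi+3\pi^*c_1(\Omega^1_X)\,\xi+\xi^2$ and the one in Proposition \ref{cuspscusp} to $\xi^3+6\pi^*c_1(\Omega^1_X)\,\xi^2+3\pi^*c_1(\mathcal E)\,\xi^2$. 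Substituting $c_1(\mathcal E)=c_1(\mathcal P^1)+c_1(\mathcal O_X(1))$ then expresses both classes in terms of $c_1(\mathcal P^1)$, $c_1(\Omega^1_X)$, $c_1(\mathcal O_X(1))$ and $\xi$.

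The one subtlety to guard against --- and the step I would treat most carefully --- is that $\pi^*c_1(\mathcal O_X(1))\,\xi^{k}$ must not be silently merged with $\xi^{k+1}$. Although $[C_\mathcal E]$ and $[\kappa_\mathcal E]$ are ultimately only integer multiples of $h^2$ and $h^3$ on $\mathbb P(V)$, these two monomials push forward with different Segre weights, since $\pi_*\xi^{n-1}=s_0(\mathcal E)=1$ whereas $\pi_*\xi^{n}=s_1(\mathcal E)=c_1(\mathcal E)$; thus $\pi^*c_1(\mathcal O_X(1))\,\xi^{n-1}$ contributes $d$ to a degree while $\xi^{n}$ contributes $3d+2g-2-k_0$. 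Keeping the $c_1(\mathcal O_X(1))$ terms separate from the pure $\xi$-powers is exactly what reconciles the class formulas with the degree formulas. As a final check I would cap the two class expressions with $h^{n-2}$ and $h^{n-3}$ respectively, apply the projection formula $\psi_*(Z)\cdot h^{j}=\psi_*(Z\,\xi^{j})$ and the two Segre identities above, and confirm that the numbers $3(3d+4g-4-k_0)$ and $4(3d+5g-5-k_0)$ reappear.
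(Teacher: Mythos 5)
Your argument is correct and is exactly the paper's proof: Proposition \ref{spaceev} is obtained by specializing Corollary \ref{degcuspcurve} (equivalently, Propositions \ref{cusp} and \ref{cuspscusp} with $r=1$) to $\mathcal F=\mathcal E$ and substituting $c_1(\mathcal E)=c_1(\mathcal P^1)+c_1(\mathcal O_X(1))$ together with $c_1(\mathcal P^1)\cap[X]=2d+2g-2-k_0$, $c_1(\Omega^1_X)\cap[X]=2g-2$, $c_1(\mathcal O_X(1))\cap[X]=d$. The caution you raise about not merging $\pi^*c_1(\mathcal O_X(1))c_1(\mathcal O_{\mathbb P(\mathcal E)}(1))^{k}$ with $c_1(\mathcal O_{\mathbb P(\mathcal E)}(1))^{k+1}$ is well taken: your class expressions retain the terms $2\pi^*c_1(\mathcal O_X(1))c_1(\mathcal O_{\mathbb P(\mathcal E)}(1))$ and $3\pi^*c_1(\mathcal O_X(1))c_1(\mathcal O_{\mathbb P(\mathcal E)}(1))^2$, and these are precisely the versions that push forward to the stated degrees $3(3d+4g-4-k_0)$ and $4(3d+5g-5-k_0)$, whereas the class formulas as printed in the proposition appear to have absorbed or omitted those terms.
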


\begin{proof}
This follows from Corollary \ref{degcuspcurve} with $\mathcal F=\mathcal E$.
\end{proof}

Consider now the case $n=3$. Then the cuspidal edge $C_\mathcal E$  of the envelope $E_\mathcal E$  is the \emph{evolute}  of $X$. 
We shall see that the formulas for the degrees of the envelope and the evolute are in accordance with Salmon's formulas \cite{Salmon}*{Footnote, p.~341}. He states that the envelope $E_\mathcal E$  has degree $3m+n$, where $m$ is the degree of the curve $X$ and $n$ is the number of osculating planes to the curve that pass through a given point. 
Let $k_1$ denote the (weighted) number of inflection points of the curve. We have $m=d$ and $n=3(d+2g-2)-2k_0-k_1$ \cite{numchar}*{Thm.~3.2, p.~481}.
Hence
$3m+n=3d+3(d+2g-2)-2k_0-k_1=6(d+g-1)-2k_0-k_1$. Salmon assumes that the curve has no inflection points, i.e., that $k_1=0$, so in this case his formula agrees with the one of Proposition \ref{poldev}. In a footnote he says that if the curve has $\theta$ inflection points, then $n$ should be replaced by $n+\theta$, so his formula would read $3m+n+\theta=3d+3(d+2g-2)-2k_0-k_1+k_1=6(d+g-1)-2k_0$, in agreement with Proposition \ref{poldev}

To find the degree of the envelope $E_\mathcal E$, Salmon \cite{Salmon}*{Footnote, p.~341} considers the intersection of $E_\mathcal E$ with the plane $H_\infty$ at infinity. He observes that at each of the $m$ intersection points $X\cap H_\infty$, the polar line at that point is contained in $H_\infty$, with multiplicity $3$. He says that the remaining part of $E_\mathcal E\cap H_\infty$ is a curve of degree $n$. We can understand this as follows: Let $L\subset H_\infty$ be a (general) line, and $L^\perp\in H_\infty$ its polar point (with respect to the conic $Q_\infty$). There are $n$ osculating planes to $C$ that pass through the point $L^\perp$. Let $O_P$, $P\in X$, be such a plane, and let $T_P\subset O_P$ denote the tangent line to $X$ at $P$. Then $T_P\cap H_\infty \subset O_P\cap H_\infty$, hence $(O_P\cap H_\infty)^\perp\subset (T_P\cap H_\infty)^\perp$ and $(O_P\cap H_\infty)^\perp \subset L$. It remains to see that the line joining $P$ and $(O_P\cap H_\infty)^\perp$ is the polar line to $C$ at $P$. Now the normal plane $N_P$ to $C$ at $P$ is the plane spanned by $P$ and $(T_P\cap H_\infty)^\perp$. The polar line is the intersection of $N_P$ with the normal plane $N_{P'}$ at an infinitesimally near point $P'$ to $P$. It therefore suffices to see that the intersection of $(T_P\cap H_\infty)^\perp$ with $(T_{P'}\cap H_\infty)^\perp$ for $P'$ infinitely near $P$ is equal to $(O_P\cap H_\infty)^\perp$. But this follows from the fact that the span of $T_P$ and $T_{P'}$ is equal to $O_P$.
Finally, one can check, by using a local parameterization of $X$, that the polar line at an inflection point lies in the plane $H_\infty$. So the intersection of $E_\mathcal E$ with the plane at infinity consists of $m$ lines with multiplicity $3$, $\theta$ lines, and a curve of degree $n$

Salmon's formula for the degree of the evolute $C_\mathcal E$ is $5m+\alpha$, where $\alpha$ denotes the number $k_2=4(d+3(g-1))-3k_0-2k_1$ of hyperosculating planes to the curve \cite{numchar}*{Thm.~3.2, p.~481}, i.e., $5m+\alpha=5d+4(d+3(g-1))-3k_0-2k_1=3(3d+4g-4-k_0)-2k_1$. This agrees with our formula in Proposition \ref{spaceev} when $k_1=0$. But the number of hyperosculating points should be weighted, so that $\alpha =2k_1+k_2$. This gives $5m+\alpha=5d+4(d+3(g-1))-3k_0-2k_1+2k_1=3(3d+4g-4-k_0)$, again in agreement with the formula in Proposition \ref{spaceev}.

To find the degree of the evolute $C_\mathcal E$, Salmon \cite{Salmon}*{Footnote, p.~341} asserts that the degree of the curve in the dual projective space $\mathbb P(V^\vee)$ consisting  of the osculating planes to the evolute is $m+r$, where $m$ is the degree of $X$ and $r$ is the rank of $X$, i.e., the degree of its tangent developable. The reason for this is that the osculating planes to the evolute are the normal planes to $X$ \cite{BL}.

Salmon does not give a formula for $\deg \kappa_\mathcal E$. However, since $E_\mathcal E$ is the tangent developable of the curve $C_\mathcal E$, we have the formula \cite{numchar}*{Thm.~(3.2), p.~481}
\[\deg E_\mathcal E=2\deg C_\mathcal E +2g-2-\deg \kappa_\mathcal E,\]
since $C_\mathcal E$ and $X$ have the same genus $g$.
This gives
\[\deg \kappa_\mathcal E=2\cdot 3(3d+4g-4-k_0)+2g-2 - 6(d+g-1)+2k_0=4(3d+5g-5-k_0),\]
which checks with Proposition \ref{spaceev}.

\begin{example}
Let $X\subset \mathbb P(V)$ be a twisted cubic. 
The envelope has degree $\deg E_\mathcal E=12$. Its evolute $C_\mathcal E$ has degree 15 and has 16 cusps.
\end{example}

\begin{remark}
The cusps of the evolute correspond to the \emph{vertices} of the given curve. For a curve in $n$-space, their number can be computed as the degree of  $\Sigma^{1,\dots,1}$, where the number of 1's is $n$.
\end{remark}

\section{Evolutes of surfaces}\label{polarsurfaces}
Assume $\mathbb P(V)$ has a Euclidean structure as described in the beginning of Section \ref{evolutes}, and that $X\subset \mathbb P(V)$ is a nonsingular surface which is in general position with respect to $H_\infty$ and $Q_\infty$. Then  the Euclidean normal bundle is $\mathcal E=(\mathcal K^1)^\vee\oplus \mathcal O_X(1)$, where  $\mathcal K^1=\Ker (V_X\to \mathcal P^1_X(1))$ is equal to the 
conormal bundle of $X$ in $\mathbb P(V)$ twisted by $\mathcal O_X(1)$:
The composed map $\psi: \mathbb P(\mathcal E)\subset X\times \mathbb P(V) \to \mathbb P(V)$ is the family of normal $(n-2)$-spaces to $X$.

Let again $E_\mathcal E$ denote the envelope of the normal spaces of $X$, $C_\mathcal E$ its cuspidal locus, and $\kappa_\mathcal E$ the cuspidal locus of $C_\mathcal E$.

\begin{prop}\label{polarofsurf}
We have
\begin{multline*}
\deg E_\mathcal E =\bigl(2c_1(\Omega_X^1)^2 +2 c_2(\Omega^1_X) +18c_1(\Omega_X^1)c_1(\mathcal O_X(1))
+ 30 c_1(\mathcal O_X(1))^2 \bigr )\cap [X]\\
\deg C_\mathcal E = \bigl (17 c_1(\Omega_X^1)^2 +5 c_2(\Omega^1_X) +102 c_1(\Omega_X^1)c_1(\mathcal O_X(1)) +138 c_1(\mathcal O_X(1))^2 \bigr )\cap [X]\\\
\deg \kappa_\mathcal E = 2\bigl (55 c_1(\Omega_X^1)^2 +5 c_2(\Omega^1_X) + 266 c_1(\Omega_X^1)c_1(\mathcal O_X(1)) + 310  c_1(\mathcal O_X(1))^2 \bigr )\cap [X]
\end{multline*}
\end{prop}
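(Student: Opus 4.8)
The plan is to specialize the general surface formulas of Corollary \ref{envsurf} and Corollary \ref{degcuspcurvesurface} to the Euclidean normal bundle $\mathcal F = \mathcal E = (\mathcal K^1)^\vee \oplus \mathcal O_X(1)$. Both corollaries express the three degrees entirely in terms of $c_1(\Omega_X^1)$, $c_2(\Omega_X^1)$, $c_1(\mathcal F)$ and $c_2(\mathcal F)$, so the whole proof reduces to computing the Chern classes $c_1(\mathcal E)$ and $c_2(\mathcal E)$ in terms of $c_1(\Omega_X^1)$, $c_2(\Omega_X^1)$, and $c_1(\mathcal O_X(1))$, followed by a substitution.

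To compute $c(\mathcal E)$, I would use the direct sum decomposition $c(\mathcal E) = c((\mathcal K^1)^\vee)\,c(\mathcal O_X(1))$ together with $c_i((\mathcal K^1)^\vee) = (-1)^i c_i(\mathcal K^1)$. The Chern classes of $\mathcal K^1$ follow from the defining sequence $0 \to \mathcal K^1 \to V_X \to \mathcal P^1_X(1) \to 0$: since $V_X$ is trivial, one has $c(\mathcal K^1) = c(\mathcal P^1_X(1))^{-1}$, the total Segre class of $\mathcal P^1_X(1)$. In turn, $c(\mathcal P^1_X(1))$ is read off from the jet sequence $0 \to \Omega_X^1(1) \to \mathcal P^1_X(1) \to \mathcal O_X(1) \to 0$, where the rank-two twist is handled by the standard formulas $c_1(\Omega_X^1 \otimes \mathcal O_X(1)) = c_1(\Omega_X^1) + 2 c_1(\mathcal O_X(1))$ and $c_2(\Omega_X^1 \otimes \mathcal O_X(1)) = c_2(\Omega_X^1) + c_1(\Omega_X^1) c_1(\mathcal O_X(1)) + c_1(\mathcal O_X(1))^2$. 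Carrying this out through degree two yields
\[
c_1(\mathcal E) = c_1(\Omega_X^1) + 4 c_1(\mathcal O_X(1)),
\]
\[
c_2(\mathcal E) = c_1(\Omega_X^1)^2 + 5 c_1(\Omega_X^1) c_1(\mathcal O_X(1)) + 9 c_1(\mathcal O_X(1))^2 - c_2(\Omega_X^1).
\]

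Substituting these into the three formulas of Corollary \ref{envsurf} and Corollary \ref{degcuspcurvesurface} and collecting terms then produces the three stated expressions. This last step is pure bookkeeping: there is no conceptual obstacle, and the only real risk is arithmetic error in expanding and combining the monomials in $c_1(\Omega_X^1)$, $c_2(\Omega_X^1)$, and $c_1(\mathcal O_X(1))$. I expect the formula for $\deg \kappa_\mathcal E$ to be the most error-prone, since its coefficient list in Corollary \ref{degcuspcurvesurface} is the largest; to minimize the chance of a slip I would compute $c_1(\mathcal E)^2$ once and reuse it across all three substitutions, and sanity-check the leading $c_1(\mathcal O_X(1))^2$ coefficients against the hypersurface specialization recorded after Proposition \ref{env}.
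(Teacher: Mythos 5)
Your proposal is correct and follows essentially the same route as the paper: both apply Corollaries \ref{envsurf} and \ref{degcuspcurvesurface} with $\mathcal F=\mathcal E$ and reduce everything to computing $c_1(\mathcal E)=c_1(\Omega_X^1)+4c_1(\mathcal O_X(1))$ and $c_2(\mathcal E)=c_1(\Omega_X^1)^2-c_2(\Omega_X^1)+5c_1(\Omega_X^1)c_1(\mathcal O_X(1))+9c_1(\mathcal O_X(1))^2$ from the jet and kernel sequences. Your values for $c_1(\mathcal E)$ and $c_2(\mathcal E)$ match the paper's, and the substitution indeed yields the three stated formulas.
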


\begin{proof}
We apply Corollaries \ref{envsurf} and \ref{degcuspcurvesurface} with $\mathcal F=\mathcal E=(\mathcal K^1)^\vee \oplus \mathcal O_X(1)$. We have
\begin{multline*}
c_1(\mathcal E)=-c_1(\mathcal K^1)+c_1(\mathcal O_X(1))=c_1(\mathcal P^1_X(1))+c_1(\mathcal O_X(1))=c_1(\Omega_X^1)+4c_1(\mathcal O_X(1)).
\end{multline*}
Now $c(\mathcal K^1)=c(\mathcal P^1_X(1))^{-1}$, which gives
\[c_2(\mathcal K^1)=c_1(\Omega^1_X)^2-c_2(\Omega_X^1)+4c_1(\Omega_X^1)c_1(\mathcal O_X(1))+6c_1(\Omega_X^1)^2.\]
Hence we get
\begin{multline*}
c_2(\mathcal E)=c_2(\mathcal K^1)-c_1(\mathcal K^1)c_1(\mathcal O_X(1))\\
= c_1(\Omega_X^1)^2-c_2(\Omega_X^1)+5c_1(\Omega_X^1)c_1(\mathcal O_X(1))+9c_1(\mathcal O_X(1))^2.
\end{multline*}
\end{proof}

\begin{cor}
Assume $n=3$ and let $d$ denote the degree of $X$. Then we have
\begin{eqnarray*}
\deg E_\mathcal E &=& 2d(d-1)(2d-1)\\
\deg C_\mathcal E&= &2d(d-1)(11d-16)\\
\deg \kappa_\mathcal E&=& 4d(30d^2-97d+78).
\end{eqnarray*}
\end{cor}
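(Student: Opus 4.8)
The plan is to obtain the three formulas as a direct specialization of Proposition \ref{polarofsurf} to a smooth degree-$d$ surface $X\subset \mathbb P^3$ (so $n=3$, $\dim V=4$), and then simplify each resulting polynomial in $d$. First I would assemble the relevant Chern data on $X$. Writing $H:=c_1(\mathcal O_X(1))$, the degree hypothesis gives $H^2\cap[X]=d$. By adjunction, using $K_{\mathbb P^3}=\mathcal O_{\mathbb P^3}(-4)$ and $N_{X/\mathbb P^3}=\mathcal O_X(d)$, one has $K_X=\mathcal O_X(d-4)$, hence
\[
c_1(\Omega^1_X)=(d-4)H.
\]
For the second Chern class I would use the normal bundle sequence $0\to T_X\to T_{\mathbb P^3}|_X\to \mathcal O_X(d)\to 0$, which yields $c(T_X)=(1+H)^4(1+dH)^{-1}$ truncated in degree $\le 2$; reading off the degree-two term (and recalling $c_2(\Omega^1_X)=c_2(T_X)$ for a rank-two bundle) gives
\[
c_2(\Omega^1_X)=(d^2-4d+6)H^2.
\]

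The remaining step is purely computational. I would record the four basic intersection numbers (all capped against $[X]$ and using $H^2\cap[X]=d$):
\[
c_1(\Omega^1_X)^2=(d-4)^2d,\quad c_2(\Omega^1_X)=(d^2-4d+6)d,
\]
\[
c_1(\Omega^1_X)c_1(\mathcal O_X(1))=(d-4)d,\quad c_1(\mathcal O_X(1))^2=d,
\]
and substitute these into the three displayed expressions of Proposition \ref{polarofsurf}. Each produces a common factor of $d$ (respectively $2d$ for the last), times a quadratic in $d$; for instance the bracket for $\deg E_\mathcal E$ collapses to $4d^2-6d+2=2(d-1)(2d-1)$, giving $2d(d-1)(2d-1)$.

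The only real care needed is in the bookkeeping and the two factorizations that make the answer tidy: for $\deg C_\mathcal E$ the bracket becomes $2(11d^2-27d+16)=2(d-1)(11d-16)$ (discriminant $25$, roots $1$ and $16/11$), and for $\deg\kappa_\mathcal E$ it becomes $2(30d^2-97d+78)$, which does not factor over the rationals and is therefore left as written. Thus I do not anticipate any conceptual obstacle here; the ``hard part'' is merely verifying the arithmetic of collecting the $d^2$, $d$, and constant coefficients correctly and confirming the stated factorizations, after which the three formulas follow immediately.
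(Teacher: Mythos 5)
Your proposal is correct and follows exactly the paper's proof: specialize Proposition \ref{polarofsurf} to $\mathcal F=\mathcal E$ with $c_1(\Omega^1_X)=(d-4)H$, $c_2(\Omega^1_X)=(d^2-4d+6)H^2$, and $H^2\cap[X]=d$, then collect coefficients. (One trivial aside is off: $30d^2-97d+78=(2d-3)(15d-26)$ does in fact factor over $\mathbb Q$, but this is irrelevant since the stated formula leaves it unfactored.)
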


\begin{proof}
We apply Proposition \ref{polarofsurf} with
 $c_1(\Omega^1_X)=(d-4)c_1(\mathcal O_X(1))$, $c_2(\Omega^1_X)=(d^2-4d+6)c_1(\mathcal O_X(1))^2$, and $c_1(\mathcal O_X(1))^2 \cap [X]=d$. 
\end{proof}

Salmon calls the evolute $E_\mathcal E$ of a surface in 3-dimensional space for  the ``surface of centres,'' or ``centro-surface'' \cite{Salmon2}*{Art.~507, p.~148}, since it is the locus of spherical curvature centers, the \emph{focal} points. 
Since there are two principal ``curvature directions'' at each point of the surface, there are two focal points on each normal line to the surface. Moreover, each normal line is tangent to the evolute at each of the two focal points, hence are bitangents to the evolute.
 Considered as a surface in $\mathbb P(\mathcal E)$, the evolute is a $2\colon 1$ cover of $X$. For a detailed local study of the evolute in the case of a real surface, see \cite{P1}*{}.

For $d=2$, our formula gives $\deg E_\mathcal E=12$, in accordance with Salmon \cite{Salmon}*{Art.~206, p.~179}, who actually computes its equation. Moreover, in the fifth edition of Salmon's book \cite{Salmon2}*{Art.~511, p.~151}, he gives the formula for $\deg E_\mathcal E$ above and also the formula $2d(d^2-d-1)$  for the degree of the dual surface -- the class of $E_\mathcal E$  \cite{Salmon2}*{Art.~509, p.~150}. 
For $d=2$, this gives 4, as is also stated in \cite{Salmon}*{Art.~199, p.~171}. In the general case, to find the class of the evolute, Salmon starts by determining the number of the bitangents of the evolute, which are the same as the normals to the surface $X$, that pass through a given point \cite{Salmon2}*{Art.~507, p.~149}. This number is the socalled Euclidean distance degree of $X$ \cite{Draisma}*{}, and is equal to $d+d(d-1)+d(d-1)^2=d(d^2-d+1)$. He shows this by taking the point to be a point in the plane at infinity. 
From this, and a consideration of the points on a surface, the normals at which meet a given line, he deduces the formula for the class of $E_\mathcal E$. For the degree of $E_\mathcal E$, he gives two alternative approaches. One, attributed to Darboux,  uses the theory of congruences of lines, the lines being the bitangents of the evolute, which are also the normals of the surface $X$. The other consists in studying the intersection of $E_\mathcal E$ with the plane at infinity.

\begin{remark}
There will be a finite number of points on the surface where the two focal points on the normal line coincide.
These points are the \emph{umbilic points} (see  \cite{Salmon}*{Art.~106, p.~85} and \cite{P1}*{p.~159}).
Salmon showed \cite{Salmon}*{footnote on p.~263} (cited in \cite{BRW}) that the number of umbilics,
 disregarding points at $\infty$,  is equal to
\[2d(5d^2-14d+11).\footnote{This formula is a correction to the formula $10d^3-25d^2+16d$ given on p.~229 in the 1865 edition of Salmon's book \cite{Salmon1865}. The difference between the two formulas is the number $3d(d-2)$ of inflection points of the plane curve $X\cap H_\infty$, erroneously included in the 1865 formula. (In a footnote, Salmon refers to Voss, Math. Annalen, IX, 1876.)}\]
Porteous considered affine surfaces in $\mathbb R^3$ and observed that the points on the evolute above the umbilics (where the two focal points coincide) are points of singularity type $D_4$  for the map from the total space of the normals to the surface to $\mathbb R^3$. One could hope that this could carry over to the present situation of a complex projective surface, and that the map $\psi \colon \mathbb P(\mathcal E)\to \mathbb P(V)$ would be Lagrangian. If so, the number of umbilics could be computed by evaluating the 
the corresponding Thom polynomial $\widetilde Q_{21}=\tilde c_1\tilde c_2-2\tilde c_3$, where the $\tilde c_i$ are the Chern classes of the virtual bundle $\Omega_{\mathbb P(\mathcal E)}^1 - \psi^*\Omega_{\mathbb P(V)}^1$ \cite{MPW}*{ p.~76}. 
However, this computation does not give  Salmon's formula, and the discrepancy is rather large.

Another approach, as suggested in \cite{P2}*{p.~124}, again for an \emph{affine} surface $X_0\subset \mathbb R^3$, would be to consider the  set of coinciding focal points as the  $\Sigma^{2,2}$ locus of the map
\[X_0\times \mathbb R^3\to \mathbb R\times \mathbb R^3,\] 
sending $(x,u)$ to $(|x-u|^2,u)$, where $|x-u]^2$ is the square of the (Euclidean) distance function.
One could then try to evaluate the corresponding Thom polynomial  \cite{Ronga} for a projectivized version of this map.
\end{remark}

\section{Osculating developables of curves}\label{osccurves}
Assume $X$ is a nonsingular curve and $f\colon X\to \mathbb P(V)$ a map which is birational onto its image. Let $d=f^*c_1(\mathcal O_{\mathbb P(V)}(1))\cap [X]$ denote the degree of $X$, and $g$ its genus. 
Denote by $\mathcal P^m_X(1)$ the sheaf of principal parts of order $m$ of $\mathcal O_X(1):=f^*\mathcal O_{\mathbb P(V)}(1)$, and let $a^m\colon V_X\to \mathcal P^m_X(1)$ denote the $m$th jet map. Let $\mathcal P^m:=\Image a^m$ denote the $m$th osculating bundle \cite{numchar}*{Prop.~(2.1), p.~478}, and set $\mathcal K^m:=\Ker a^m$, so that we have exact sequences
\[0\to \mathcal K^m \to V_X \to \mathcal P^m\to 0.\]
 The \emph{$m$th  osculating developable} $D^m_X$ of $X$ is the image of the composed map
\[\mathbb P(\mathcal P^m)\subset X\times \mathbb P(V)\to \mathbb P(V).\]
The degree of $D_X^m$ is
\[\deg D^m_X=c_1(\mathcal O_{\mathbb P(\mathcal P^m)}(1))^{m+1}\cap [\mathbb P(\mathcal P^m)]
=\pi_*c_1(\mathcal O_{\mathbb P(\mathcal P^m)}(1))^{m+1} \cap [X],\]
which is equal to 
\[s_1(\mathcal P^m)\cap [X]=c_1(\mathcal P^m)\cap [X].\]
From the exact sequences
\[0\to S^i \Omega^1_X \otimes \mathcal O_X(1)=(\Omega^1_X)^{\otimes i} \otimes \mathcal O_X(1) \to \mathcal P^i_X(1) \to \mathcal P^{i-1}_X(1)\to 0,\]
it follows that
\[ \textstyle c_1(\mathcal P^m_X(1))= \binom{m+1}2 c_1(\Omega^1_X)+(m+1)c_1(\mathcal O_X(1)).\]
The stationary indices $k_i$ of the curve are defined pointwise via local parameterizations of the curve \cite{numchar}*{p.~481}. Intuitively, $k_0$ is the (weighted) number of cusps, $k_1$ the (weighted) number of inflectional tangents, \dots, and $k_{n-1}$ the (weighted) number of hyperosculating hyperplanes. These numbers satisfy the following global formulas \cite{numchar}*{Thm.~(3.2), p.~481}:
\[ \textstyle (c_1(\mathcal P^m_X(1))-c_1(\mathcal P^m))\cap [X]=\sum_{i=0}^{m-1} (m-i)k_i.\]
Hence
\[ \textstyle \deg D^m_X=c_1(\mathcal P^m)\cap [X]= \binom{m+1}2 (2g-2) +(m+1)d  -\sum_{i=0}^{m-1}(m-i)k_i.\]

It is well know that in the case $n=2$, when $X$ is a plane curve, the envelope of the tangents to $X$ is equal to $X$ union its inflectional tangents. When $n=3$, the envelope of the osculating planes to $X$ is equal to the tangent developable of $X$ union its hyperosculating planes. More generally, we have the following result, which generalizes and extends to the complex case a similar result of Ishikawa for real curves \cite{Ishikawa}*{Thm.~1, p.~604}.

\begin{prop}
The envelope $E_{\mathcal P^{n-1}}$ of the family of osculating hyperplanes 
\[\psi\colon \mathbb P(\mathcal P^{n-1})\to \mathbb P(V)\]
to $X$ is equal to  its $(n-2)$th osculating developable $D^{n-2}_X$ union its hyperosculating hyperplanes.
\end{prop}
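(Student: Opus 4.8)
The plan is to prove the statement in two stages: a geometric identification of $E_{\mathcal P^{n-1}}$ as a variety, and then the degree formula by specializing Corollary \ref{envcurve}. Note first that for a curve $r=1$, so $\mathcal F=\mathcal P^{n-1}$ has rank $n=n-r+1$; hence $\mathbb P(\mathcal P^{n-1})\to\mathbb P(V)$ is a genuine family of hyperplanes and the envelope machinery of Section \ref{envelopes} applies verbatim.

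For the geometric identity, I would describe the osculating hyperplane $H_P$ by its annihilator, a point $\omega_P\in\mathbb P(V^\vee)$; as $P$ varies these sweep out the strict dual curve $X^*$. The envelope of a one-parameter family of hyperplanes is its characteristic locus, swept out by the limits $\lim_{P'\to P}(H_P\cap H_{P'})$. Away from the hyperosculating points, i.e.\ those where $k_{n-1}\neq 0$, this limit is exactly the fibre of the $(n-2)$th osculating space $\mathbb P(\mathcal P^{n-2})$, so the moving characteristic sweeps out $D^{n-2}_X$. At a hyperosculating point the dual curve $X^*$ is stationary, the characteristic degenerates, and the entire hyperplane $H_P$ is absorbed into the envelope; these are the hyperosculating hyperplanes. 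This is the classical mechanism: for $n=2$ it says the envelope of the tangent lines is $X$ together with its flex tangents, and for $n=3$ that the envelope of the osculating planes is the tangent developable together with the hyperosculating planes.

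For the degree, apply Corollary \ref{envcurve} with $\mathcal F=\mathcal P^{n-1}$:
\[\deg E_{\mathcal P^{n-1}}=\bigl(c_1(\Omega_X^1)+2c_1(\mathcal P^{n-1})\bigr)\cap[X].\]
Substituting $c_1(\Omega_X^1)\cap[X]=2g-2$ and the value $c_1(\mathcal P^{n-1})\cap[X]=\deg D^{n-1}_X=\binom{n}{2}(2g-2)+nd-\sum_{i=0}^{n-2}(n-1-i)k_i$ recorded earlier in this section, the coefficient of $(g-1)$ becomes $1+2\binom{n}{2}=n^2-n+1$, and collecting terms gives the asserted formula.

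As a consistency check that simultaneously confirms the decomposition, I would verify $\deg E_{\mathcal P^{n-1}}=\deg D^{n-2}_X+k_{n-1}$. Writing $r_m:=\deg D^m_X$ and using the second-difference (generalized Pl\"ucker) relation $k_m=(2g-2)+2r_m-r_{m-1}-r_{m+1}$ together with $r_n=0$ (since $\mathcal P^n=V_X$ is trivial), one obtains $r_{n-2}+k_{n-1}=(2g-2)+2r_{n-1}$, which is precisely the output of Corollary \ref{envcurve}. The main obstacle is not the degree computation, which is routine, but the local analysis at the hyperosculating points: one must show that the whole hyperplane $H_P$ (not merely a proper subvariety) enters the envelope, and with multiplicity one, so that these points contribute exactly $k_{n-1}$ to the degree. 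This requires a local parametrization of $X$ near a stationary point and a careful examination of the rank-drop of $d\psi$ there, with the Pl\"ucker-relation check above pinning down the correct weighted count and reconciling the scheme-theoretic description with Corollary \ref{envcurve}.
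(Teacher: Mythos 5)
Your proposal is correct and follows essentially the same route as the paper: the degree formula is obtained exactly as in the paper by specializing Corollary \ref{envcurve} to $\mathcal F=\mathcal P^{n-1}$ and inserting $c_1(\mathcal P^{n-1})\cap[X]=\deg D^{n-1}_X$, and the decomposition of the envelope is established by the rank-drop analysis of $d\psi$ in a local parametrization $r(t)$ --- the very computation you correctly identify as the remaining work, and which the paper carries out by row-reducing $d\psi$ to see that the rank falls below $n$ precisely when $u_{n-1}=0$ or $r(t)$ is a hyperosculation point. Your closing consistency check $\deg E_{\mathcal P^{n-1}}=\deg D^{n-2}_X+k_{n-1}$ is also made in the paper, via the explicit formulas for $\deg D^{n-2}_X$ and $k_{n-1}$ rather than the Pl\"ucker second-difference relation.
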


\begin{proof}
Let $t$ be a local parameter on $X$ and $t,u_1,\dots,u_{n-1}$ local parameters on $\mathbb P(\mathcal P^{n-1})$. Let $r(t)$ be a local parameterization of $f\colon X\to \mathbb P(V)$.
The $(n-1)$th jet map $a^{n-1}\colon V_X\to \mathcal P^1_X(1)$ is given locally by the matrix
\[A^{n-1}(t):=\left( \begin{array}{c}
r(t)\\
r'(t)\\
\vdots\\
r^{(n-1)}(t)
\end{array}\right).
\]
 Above points on $X$ where the map $a^{n-1}$ is surjective, the map $\psi\colon \mathbb P(\mathcal P^{n-1})\to \mathbb P(V)$ is
 given locally by \[(t,u_1,\ldots,u_{n-1})\mapsto r(t)+u_1r'(t)+\cdots +u_{n-1}r^{(n-1)}(t).\]
Its differential $d\psi$ is given by
\[dA^{n-1}(t,u):=\left( \begin{array}{c}
r'(t)+u_1r''(t)+\cdots +u_{n-1}r^{(n)}(t)\\
r'(t)\\
\vdots\\
r^{(n-1)}(t)
\end{array}\right)\cong
\left(\begin{array}{c}
u_{n-1}r^{(n)}(t)\\
r'(t)\\
\vdots\\
r^{(n-1)}(t)
\end{array}\right).
\]
By assumption, the rank of $A^{n-1}(t)$ is $n$, hence the 
rank of $d\psi$ is $<n$ when $u_{n-1}r^{(n)}(t)=0$.   If $u_{n-1}=0$, the point lies in the $(n-2)$th osculating space to $X$ at $r(t)$, hence in $D^{n-2}_X$. If $r^{(n)}(t)=0$, the point $r(t)$ is a point of hyperosculation, and $d\psi$ has rank $< n$ at all points above it in $\mathbb P(\mathcal P^{n-1})$, i.e., at all points in the $(n-1)$th osculating space at the point $r(t)$.

In the case that we are considering a point $r(t)$ where $a^{n-1}$ is not surjective, say for $t=0$, then the inclusion $\mathcal P^{n-1}\subseteq \mathcal P^{n-1}_X(1)$ is given by a matrix which has rank $< n$ for $t=0$. The map $V_X\to \mathcal P^{n-1}$ is given by a modification of the matrix $A^{n-1}(t)$. A local study as in \cite{numchar}*{pp.478--479} shows that we get the same conclusion as in the previous case.
\end{proof}
\medskip

Let us check that the numerical formulas agree.
It follows from Proposition \ref{envcurve} that
\[\deg E_{\mathcal P^{n-1}}=2g-2+2c_1(\mathcal P^{n-1})\cap [X].\]
hence
\begin{multline*}\textstyle \deg E_{\mathcal P^{n-1}}= 2g-2+2n\bigl(d+(n-1)(g-1)-\sum_{i=0}^{n-2} (n-1-i)k_i\bigr)\\
\textstyle =2\bigl(nd+(n^2-n+1)(g-1)-\sum_{i=0}^{n-2} (n-1-i)k_i\bigr).
\end{multline*}
From what we have seen,
\[\textstyle
\deg D_X^{n-2}= (n-1)(d+(n-2)(g-1))-\sum_{i=0}^{n-3}(n-2-i)k_i.\]
We also know that \cite{numchar}*{Thm.~(3.2), p.~481}
\[ \textstyle k_{n-1}= (n+1)(d+n(g-1))-\sum_{i=0}^{n-2}(n-i)k_i.
\]
It follows that $\deg E_{\mathcal P^{n-1}}=\deg D_X^{n-2}+k_{n-1}$.

\bigskip
\bigskip

\noindent {\bf References}
\bigskip

\begin{biblist}
\bib{BL}{book}{
   author={Blaschke, W.},
   author={Leichtwei\ss , K.},
   title={Elementare Differentialgeometrie},
   language={German},
   series={Die Grundlehren der mathematischen Wissenschaften, Band 1},
   note={F\"{u}nfte vollst\"{a}ndig neubearbeitete Auflage von K. Leichtwei\ss },
   publisher={Springer-Verlag, Berlin-New York},
   date={1973},
   pages={x+369},
}

\bib{BRW}{article}{
author={Breiding, Paul},
author={Ranestad, Kristian},
author={Weinstein, Madeleine},
title={Enumerative geometry of curvature of algebraic hypersurfaces},
journal={Acta Universitatis Sapientiae, Mathematica},
volume={17},
date={2025},
number={1},
}

\bib{Darb}{book}{
   author={Darboux, Gaston},
   title={Le\c{c}ons sur la th\'{e}orie g\'{e}n\'{e}rale des surfaces et les applications
   g\'{e}om\'{e}triques du calcul infinit\'{e}simal. Premi\`ere partie.
   G\'{e}n\'{e}ralit\'{e}s. Coordonn\'{e}es curvilignes. Surfaces minima},
   publisher={Gauthier-Villars, Paris},
   date={1887},
   pages={521}
}

\bib{Del}{article}{
   author={Delcourt, Jean},
   title={Analyse et g\'{e}om\'{e}trie, histoire des courbes gauches de Clairaut \`a
   Darboux},
   journal={Arch. Hist. Exact Sci.},
   volume={65},
   date={2011},
   number={3},
   pages={229--293},
   issn={0003-9519},
}

\bib{Draisma}{article}{
   author={Draisma, Jan},
   author={Horobe{\c{t}}, Emil},
   author={Ottaviani, Giorgio},
   author={Sturmfels, Bernd},
   author={Thomas, Rekha R.},
   title={The Euclidean Distance Degree of an Algebraic Variety},
   journal={Found. Comput. Math.},
   volume={16},
   date={2016},
   number={1},
   pages={99--149},
   issn={1615-3375},
}

\bib{Fuchs}{article}{
   author={Fuchs, Dmitry},
   title={Evolutes and involutes of spatial curves},
   journal={Amer. Math. Monthly},
   volume={120},
   date={2013},
   number={3},
   pages={217--231},
   issn={0002-9890},
}

\bib{FT}{article}{
   author={Fuchs, Dmitry},
   author={Tabachnikov, Serge},
   title={Iterating evolutes of spacial polygons and of spacial curves},
   journal={Mosc. Math. J.},
   volume={17},
   date={2017},
   number={4},
   pages={667--689},
   issn={1609-3321},
}

\bib{FIRST}{article}{
   author={Fuchs, Dmitry},
   author={Izmestiev, Ivan},
   author={Raffaelli, Matteo},
   author={Szewieczek, Gudrun},
   author={Tabachnikov, Serge},
   title={Differential geometry of space curves: forgotten chapters},
   journal={Math. Intelligencer},
   volume={17},
   date={2023},
   number={},
   pages={},
   issn={},
}

\bib{Gaf}{article}{
   author={Gaffney, Terence},
   title={The Thom polynomial of $\overline{\sum ^{1111}}$},
   conference={
      title={Singularities, Part 1},
      address={Arcata, Calif.},
      date={1981},
   },
   book={
      series={Proc. Sympos. Pure Math.},
      volume={40},
      publisher={Amer. Math. Soc., Providence, R.I.},
   },
   date={1983},
   pages={399--408},
}

\bib{Huy}{book}{
   author={Huygens,Christiaan},
   title={Horologium Oscillatorium: Sive de Motu Pendulorum ad Horologia Aptato Demonstrationes Geometricae},
   publisher={https://archive.org/details/B-001-004-158/Horologium-oscillatorium/page/n11/mode\-/2up},
   date={1673},
   }

\bib{Ishikawa}{article}{
   author={Ishikawa, Goo},
   title={Determinacy of the envelope of the osculating hyperplanes to a
   curve},
   journal={Bull. London Math. Soc.},
   volume={25},
   date={1993},
   number={6},
   pages={603--610},
   issn={0024-6093},
}

\bib{MPW}{article}{
   author={Mikosz, Ma\l gorzata},
   author={Pragacz, Piotr},
   author={Weber, Andrzej},
   title={Positivity of Thom polynomials II: the Lagrange singularities},
   journal={Fund. Math.},
   volume={202},
   date={2009},
   number={1},
   pages={65--79},
   issn={0016-2736},
}

\bib{Monge}{article}{
  author={Monge,Gaspard},
  title={M\'emoire sur les d\'evelopp\'ees, les rayons de courbure et les diff\'erents genres d'inflexions des courbes \`a double courbure},
  journal={M\'emoires de divers scavants},
  date={1875 (1871)},
  volume={X},
  pages={511--550},
  }

\bib{numchar}{article}{
   author={Piene, Ragni},
   title={Numerical characters of a curve in projective $n$-space},
   conference={
      title={Real and complex singularities},
      address={Proc. Ninth Nordic Summer School/NAVF Sympos. Math., Oslo},
      date={1976},
   },
   book={
      publisher={Sijthoff and Noordhoff, Alphen aan den Rijn},
   },
   date={1977},
   pages={475--495},
}

\bib{Polar_rev}{article}{
   author={Piene, Ragni},
   title={Polar varieties revisited},
   conference={
      title={Computer algebra and polynomials},
   },
   book={
      series={Lecture Notes in Comput. Sci.},
      volume={8942},
      publisher={Springer, Cham},
   },
   date={2015},
   pages={139--150
   },
}

\bib{reci}{article}{
   author={Piene, Ragni},
   title={Higher order polar and reciprocal polar loci},
   conference={
      title={Facets of algebraic geometry. Vol. II},
   },
   book={
      series={London Math. Soc. Lecture Note Ser.},
      volume={473},
      publisher={Cambridge Univ. Press, Cambridge},
   },
   date={2022},
   pages={238--253},
}

\bib{ev}{article}{
author={Piene, Ragni},
author={Riener, Cordian},
author={Shapiro, Boris},
title={Return of the plane evolute},
journal={Ann. Inst. Fourier},
volume={75},
   date={2025},
   number={4},
   pages={1685--1751},

}

\bib{P2}{article}{
   author={Porteous, Ian R.},
   title={Geometric differentiation---a Thomist view of differential
   geometry},
   conference={
      title={Proceedings of Liverpool Singularities Symposium, II
      (1969/1970)},
   },
   book={
      series={Lecture Notes in Math.},
      volume={Vol. 209},
      publisher={Springer, Berlin-New York},
   },
   date={1971},
   pages={122--127},
}

\bib{P1}{book} {
    AUTHOR = {Porteous, Ian R.},
     TITLE = {Geometric differentiation. For the intelligence of curves and surfaces},
 PUBLISHER = {Cambridge University Press, Cambridge},
      YEAR = {2001},
     PAGES = {xvi+333},
      ISBN = {0-521-00264-8},
}

\bib{Rim}{article}{
   author={Rim\'{a}nyi, Rich\'{a}rd},
   title={Thom polynomials, symmetries and incidences of singularities},
   journal={Invent. Math.},
   volume={143},
   date={2001},
   number={3},
   pages={499--521},
   issn={0020-9910},
}

\bib{Romero}{article}{
   author={Romero-Fuster, M. C.},
   author={Sanabria-Codesal, E.},
   title={Generalized evolutes, vertices and conformal invariants of curves
   in ${\mathbb R}^{n+1}$},
   journal={Indag. Math. (N.S.)},
   volume={10},
   date={1999},
   number={2},
   pages={297--305},
   issn={0019-3577},
}

\bib{Ronga}{article}{
   author={Ronga, F.},
   title={Le calcul des classes duales aux singularit\'{e}s de Boardman d'ordre
   deux},
   language={French},
   journal={Comment. Math. Helv.},
   volume={47},
   date={1972},
   pages={15--35},
   issn={0010-2571},
}

\bib{Salmon1865}{book}{
   author={Salmon, George},
   title={A treatise on the analytic geometry of three dimensions},
   note={
   2nd ed.
   },
   publisher={Hodges, Smith, and Co.},
   date={1865},
  pages={xxiv+470},
}

\bib{Salmon}{book}{
   author={Salmon, George},
   title={A treatise on the analytic geometry of three dimensions},
   note={
   4th ed. 
   },
   publisher={Hodges, Figgis, and Co.},
   date={1882},
  pages={xxiv+470},
}

\bib{Salmon2}{book}{
   author={Salmon, George},
   title={A treatise on the analytic geometry of three dimensions},
   note={
   5th ed. Vol. II
   },
   publisher={Hodges, Figgis, and Co.},
   date={1915},
}

\bib{Trifogli}{article}{
   author={Trifogli, C.},
   title={Focal loci of algebraic hypersurfaces: a general theory},
   journal={Geom. Dedicata},
   volume={70},
   date={1998},
   number={1},
   pages={1--26},
}

\bib{UV}{article}{
   author={Uribe-Vargas, Ricardo},
   title={On vertices, focal curvatures and differential geometry of space
   curves},
   journal={Bull. Braz. Math. Soc. (N.S.)},
   volume={36},
   date={2005},
   number={3},
   pages={285--307},
   issn={1678-7544},
}

\end{biblist}

\end{document}